\setlist[itemize]{topsep=0.2em, itemsep=0.2em, leftmargin=2em}
\setlist[enumerate]{topsep=0.2em, itemsep=0.2em, leftmargin=2em}
\setlist[description]{topsep=0.2em, itemsep=0.2em, leftmargin=2em}
\definecolor{alert}{rgb}{0.8,0,0.3}
\newcommand{\alert}[1]{%
	\marginpar{%
		\ifodd\value{page} \raggedright \else \raggedleft \fi
		\footnotesize{\textcolor{alert}{#1}}
	}
}
\newcommand{\N}{\mathbb{N}}
\newcommand{\R}{\mathbb{R}}
\renewcommand{\H}{\mathbb{H}}
\newcommand{\E}{\mathbb{E}}
\newcommand{\Sf}{\mathbb{S}}
\newcommand{\csch}{\mathop{\rm csch}\nolimits}
\newcommand{\arctanh}{\mathop{\rm arctanh}\nolimits}
\newcommand{\Div}{\mathop{\rm div}\nolimits}
\newcommand{\dist}{\mathop{\rm dist}\nolimits}
\newcommand{\Length}{\mathop{\rm Length}\nolimits}
\newcommand{\Flux}[2]{\mathrm{Flux}_{#1}\left(#2\right)}
\newcommand{\Prod}[1]{ \left\langle {#1} \right\rangle}
\newcommand{\sign}[1]{\mathop{\rm sign}\nolimits(#1)}
\newtheorem{theorem}{Theorem}[section]
\newtheorem{proposition}[theorem]{Proposition}
\newtheorem{lemma}[theorem]{Lemma}
\theoremstyle{definition}
\newtheorem{definition}[theorem]{Definition}
\theoremstyle{remark}
\newtheorem{remark}[theorem]{Remark}
\numberwithin{equation}{section}
\begin{document}
	
	\title[Constant mean curvature graphs with prescribed asymptotic values in $\E(-1,\tau)$]{Constant mean curvature graphs with prescribed asymptotic values in $\E(-1,\tau)$}
	
	\author{Andrea Del Prete}
	\address{Dipartimento di Matematica "Felice Casorati"\\
		Universit\`{a} degli Studi di Pavia\\
		Via Ferrata 5 -- 27100 Pavia (Italy)}
	\email{andrea.delprete@unipv.it}
	
	\thanks{The author was partially supported by ``INdAM - GNSAGA Project", codice CUP\_E55F22000270001, PRIN project “Geometry and topology of manifolds” project no. F53D23002800001 and by MCIN/AEI project PID2022-142559NB-I00. 
	}
	
	\subjclass[2020]{53A10, 53C30, 53C42, 53C45}
	
	\keywords{Homogeneous 3-manifolds, Constant mean curvature graphs, Asymptotic boundary}
	
	\begin{abstract}
		In the homogeneous manifold $\E(-1,\tau),$ for $-\tfrac{1}{2}<H<\tfrac{1}{2},$ {we define a new product compactification in which the slices $\left\{t=c\right\}_{c\in\R}$ are rotational $H$-surfaces. This product compatification is the natural setting where it makes sense to study the asymptotic Dirichlet Problem for the constant mean curvature equation. Indeed, for every rectifiable curve $\Gamma$ projecting bijectively onto $\partial\H^2$ we prove the existence of a unique entire $H$-graph that is asymptotic to $\Gamma$.} We also find necessary and sufficient conditions for the existence of $H$-graphs over unbounded domains having prescribed, possibly infinite boundary data.
	\end{abstract}
	
	\maketitle
	
	\section{Introduction}\label{sec:intro}\
	
	Constant mean curvature surfaces in simply connected homogeneous 3-manifolds have been a subject of extensive research by various authors over the past two decades. In particular, considerable attention has been devoted to 3-manifolds with an isometry group of dimension at least 4. These 3-manifolds, excluding the hyperbolic space $\H^3$, can be classified in a 2-parameter family denoted as $\E(\kappa, \tau)$, where $\kappa$ and $\tau$ are real numbers. This classification depends on the property of the $\E(\kappa, \tau)$-spaces to admit a Riemannian submersion $\pi\colon\E(\kappa,\tau)\to\mathbb{M}(\kappa)$ over the complete simply connected Riemannian surface with constant curvature $\kappa$ whose fibers are the integral curves of the unique unitary Killing vector field $\xi\in\mathfrak{X}(\E)$ such that $\tau$ is the bundle curvature (that is $\nabla_X\xi=\tau X\wedge\xi$ for all vector fields $X\in\mathfrak{X}(\E), $ where $\nabla$ is the Levi-Civita connection of $\E$ and $\wedge$ is the cross product in $\E$ with respect to a chosen orientation, see \cite{Man14}).
	
	This geometric setup naturally prompts the question of investigating constant mean curvature graphs over specific domains of $\mathbb{M}(\kappa)$, i.e. sections of the Riemannian submersion obtained by deforming a fixed zero section through the flow lines.
	
	In this paper we focus on the case $\kappa=-1$ and investigate graphs with constant mean curvature $H$ that will be referred as an $H$-graphs or, whenever $H$ is not desired to be explicit, CMC graphs. 
	
	One of the pioneering works in this setting is due to Nelli and Rosenberg in $\H^2\times\R=\E(-1,0)$ (see \cite{NelRos02}), who solved the following problems.
	\begin{itemize}
		\item The existence of a unique entire minimal graph with arbitrarily prescribed asymptotic values{, that is, the graph is asymptotic to a rectifiable curve that projects bijectively onto $\partial \H^2$.}
		\item The Jenkins--Serrin problem over relatively compact domains, that is, to find necessary and sufficient conditions that guarantee the existence and uniqueness of a solution for the Dirichlet problem for the minimal surface equation with possibly infinite boundary data.
	\end{itemize}
	
	So far, no result with an analogous statement as in \cite{NelRos02} has been given for CMC graphs with prescribed asymptotic boundary value. This is also due to the fact that in the standard compactification for $\E(-1,\tau)$, i.e. the model described in Section~\ref{sec:theSpace}, such a result cannot be achieved due to the Slab Theorem (see \cite{HauMenRod19}), which implies that any entire function satisfying the constant mean curvature equation must diverge.
	
	Concerning the Jenkins--Serrin problem, several extensions have been achieved.
	A Jenkins--Serrin problem for CMC graphs in $\H^2\times\R$ over relatively compact domains of $\H^2$ was proved by Hauswirth, Rosenberg and Spruck in \cite{HauRoSpr09} and by Eichmair and Metzger in \cite{EM} in a general product space $M\times\R$. Furthermore, in \cite{DMN}, the author, together with Manzano and Nelli, proves an analogous result for minimal graphs in a general Killing submersion, which include the space $\E(-1,\tau).$
	In \cite{MazRodRos11} Mazet, Rodriguez and Rosenberg proved in $\H^2\times\R$ a Jenkins--Serrin type result for minimal surfaces over unbounded domains of $\H^2$, allowing the asymptotic boundary of such domains to contain open subsets of $\partial_\infty\H^2$. Later, Melo proved the same result in the space $\E(-1,\tau)$ \cite{Mel}.
	Finally, in \cite{FolMe12}, Folha and Melo in \cite{FolMe12} solved the Jenkins--Serrin problem for $H$-graphs in $\H^2\times\R$ with $0<H<1/2$ over an unbounded domain whose asymptotic boundary does not contain open subsets of $\partial_\infty\H^2$.
	
	In this paper we consider a new model for $\E(-1,\tau)$ in which the zero section has a fixed constant mean curvature $H\in\left(-\tfrac{1}{2},\tfrac{1}{2}\right)$. Using this model, we can consider a new product compactification (which we call $H$-compactification) of $\E(-1,\tau)$ such that every family of $H$-surfaces that foliates the space $\E(-1,\tau)$ also foliates its asymptotic boundary $\pi^{-1}(\partial_\infty\H^2)$, as it happens for the standard product compatification when we consider minimal surfaces (see \Cref{rem:Hcomp} and \Cref{Hcompactification} for a detailed description of the relation between the classical product compactification and the $H$-compactification). Furthermore, considering $|H|$ to be strictly less then the critical value for the mean curvature ensures that the fibers of the submersion stay transversal to the foliation at the infinity reproducing the same properties of the classical product compactification in the minimal case (see \Cref{sec:newModel} for the details).
	
	In particular, fixed $H\in\left(-\tfrac{1}{2},\tfrac{1}{2}\right)$, we solve the following problems in the $H$-compatification of $\E(-1,\tau)$:
	\begin{itemize}
		\item Given a rectifiable curve $\Gamma\in\partial_\infty\H^2\times^H\R$ that is the graph of a function of $\partial_\infty\H^2$, we are able to construct an $H$-graph $\Sigma$ that is asymptotic to $\Gamma$ (see \Cref{thm:entire}). In such a way, we have a direct control over the symmetries of the graph $\Sigma$. Indeed, by means of the Maximum Principle one can prove that $\Sigma$ inherits the symmetries of the curve $\Gamma$. We cannot cover the case $H=1/2$ because of the half-space property given in \cite{NS,Maz15}. Moreover, due to our result, a half-space theorem for entire $H$-graphs with $H\in\left(-\tfrac{1}{2},\tfrac{1}{2}\right)$ cannot be proven.
		\item We give the necessary and sufficient conditions that guaranty the existence of a solution to the Jenkins-Serrin problem for $H$-graphs over a larger set of unbounded domains of $\H^2$, allowing them to contain parts of $\partial\H^2$ in their asymptotic boundary (see \Cref{thm:JS}).
	\end{itemize}
	 {Usually, the mean curvature is considered to be non-negative since, when $\tau=0$, we can solve the case $H\in(-1/2,0)$ simply by applying a reflection with respect to the horizontal totally geodesic graph to a solution with positive $H$. In this article, we describe the new model and the upper and lower barriers in this setting for both $H>0,$ and $H<0$. We then prove the main theorems under the assumption $H\in(0,1/2)$, as the other case follows by an analogous argument.}
	
	The paper is organized as follows.
	\Cref{sec:theSpace} is a description of the basic properties of the space $\E(-1,\tau)$ and its compactification. In \Cref{sec:asymptotic-behaviour} we recall the constant mean curvature graphs that are invariant with respect to a one-parameter group of isometries of $\E(-1,\tau)$ studied by Peñafiel in \cite{Pen12} and we study their asymptotic behaviour.
	In \Cref{sec:newModel} we introduce the new model for $\E(-1,\tau)$ in which the plane $\left\{t=0\right\}$ has constant mean curvature $H\in(-1/2,1/2)$.
	In \Cref{sec:Barriers}, we use the surfaces studied in \Cref{sec:asymptotic-behaviour} to construct barriers for the Dirichlet Problem for the constant mean curvature equation. Finally, the main results of this paper are given in \Cref{sec:EntireHGraphs,App}, where, respectively, we prove the existence of entire $H$-graphs with prescribed asymptotic boundary and the Jenkins--Serrin problem.
	
	\section{The space $\E(-1,\tau)$}\label{sec:theSpace}\
	The space $\E(-1,\tau)$ is the homogeneous 3-dimensional manifold defined as $\Omega\times\R$, endowed with the metric
	\begin{equation}\label{eq:metricGen}
		ds^2_\lambda=\lambda^2\left(dx^2+dy^2\right)+\left[dt-2\tau\lambda\left(\left(\tfrac{1}{\lambda}\right)_ydx-\left(\tfrac{1}{\lambda}\right)_xdy\right)\right]^2.
	\end{equation}
	where $\Omega=\left\{(x,y)\in\R^2\mid\lambda(x,y)>0\right\}$ and $\lambda\in\mathcal{C}^\infty(\Omega)$ satisfies $\Delta(\log(\lambda))-\lambda^2=0$.
	
	Notice that $\left(\Omega,\lambda^2(dx^2+dy^2)\right)$ is isometric to the hyperbolic plane $\mathbb{H}^2$ with a constant sectional curvature of $-1$ and the Killing submersion is $\pi(x,y,t)=(x,y)$ with $\partial_t$ being the unitary Killing field satisfying the identity $\nabla_X\partial_t=\tau X\wedge\partial_t$ for any $X\in\mathfrak{X}(\E)$. In this context, we call \emph{vertical} every vector field parallel to $\partial_t,$ and horizontal every vector field orthogonal to $\partial_t$. 
	In this model, the mean curvature of the graph $\Sigma_u=(x,y,u(x,y))\subset\Omega\times\R$ of the function $u\in\mathcal{C}^2(\Omega)$ is given in the divergent form as
	\begin{equation}\label{eq:meancurvature}
		2H=\Div\left(\frac{ Gu}{\sqrt{1+\|Gu\|^2}}\right),
	\end{equation}
	where $\Div(\cdot)$ and $\|\cdot\|$ are the divergence and norm of $\mathbb{H}^2$ in the conformal metric $\lambda^2(dx^2+dy^2),$ and $Gu=\left(u_x-\tfrac{2\tau \lambda_y}{\lambda}\right)\frac{\partial_x}{\lambda^2}+\left(u_y+\tfrac{2\tau \lambda_x}{\lambda}\right)\frac{\partial_y}{\lambda^2}$ is the generalized gradient. 		
	{\begin{remark}
			Equation \eqref{eq:meancurvature} follows from $2H=\Div_{\E(-1,\tau)}(N)=\Div_{\H^2}(\pi_* N)$, where $\pi\colon\E(-1,\tau)\to\H^2$ is the projection on the first two coordinates and $N_u=\frac{\partial_t+Gu}{\sqrt{1+\| Gu\|^2}}$ is upward-pointing unit normal to the Killing graph of $u$.
	\end{remark}}
	
	Depending on the choice of $\lambda,$ we have two classical models to describe $\E(-1,\tau):$
	\begin{itemize}
		\item  If $\lambda(x,y)=\lambda_{\mathcal{H}}(x,y)=\frac{1}{y}$, we get the half-plane model for $\mathbb{H}^2$ and the half-space model $\mathcal{H}$ for $\E(-1,\tau),$ where the vertical boundary is $(\left\{y=0\right\}\cup\left\{\infty\right\})\times\R \equiv(\R\cup\left\{\infty\right\})\times\R.$ 
		\item If $\lambda(x,y)=\lambda_{\mathcal{C}}(x,y)=\frac{2}{1-x^2-y^2}$, we obtain the Poincar\'e Disk model for $\mathbb{H}^2$ and the cylindrical model $\mathcal{C}$ for $\E(-1,\tau),$ with $\mathbb{S}(1)\times\R$ as vertical boundary.
	\end{itemize}
	
	Using the complex coordinate $z=x+i y$, the isometries between this two models are given by lifting the Möbius transformation of $\mathbb{H}^2$ (see \cite{Cas22} for more details):
	\begin{equation*}\label{iso:HtoC}
		\begin{array}{rccc}
			\psi\colon&\overline{\mathcal{H}}&\to&\overline{\mathcal{C}}\\
			&(x,y,t)&\mapsto&\left(\frac{z-i}{z+1},t-4\tau\arctan\left(\frac{x}{y+1}\right)\right),
		\end{array}
	\end{equation*}
	and
	\begin{equation}\label{iso:CtoH}
		\begin{array}{rccc}
			\psi^{-1}\colon&\overline{\mathcal{C}}&\to&\overline{\mathcal{H}}\\
			&(x,y,t)&\mapsto&\left(\frac{i(z+1)}{1-z},t-4\tau\arctan\left(\frac{y}{1-x}\right)\right).
		\end{array}
	\end{equation}
	
	Some isometries of $\E(-1,\tau),$ can be easily described using either $\mathcal{C}$ or $\mathcal{H}:$
	\begin{itemize}
		\item Vertical translation: in both models  \[\varphi_1\colon(x,y,t)\mapsto(x,y,t+c),\,\forall c\in\R\]
		\item Hyperbolic translation along an horizontal geodesic: using the half-space model $\mathcal{H},$ the hyperbolic translation along the horizontal geodesics in $\left\{x=x_0\right\}$ is given by the map \[\varphi_2\colon(x,y,t)\mapsto(x_0+c(x-x_0),c y ,t),\,\forall c\in\R.\]
		\item Parabolic translation: using the half-space model $\mathcal{H},$ the hyperbolic translation along the horocycle $\left\{y=y_0\right\}$ is given by the map \[\varphi_3\colon(x,y,t)\mapsto(x+c,y,t),\,\forall c\in\R.\]
		\item Rotation: in the cylinder model $\mathcal{C},$ rotations with respect to the $t$-axis are Euclidean rotations with respect to the $t$-axis, that are, \[\varphi_4\colon(x,y,z)\mapsto(x\cos(\theta)-y\sin(\theta),x\sin(\theta)+y\cos(\theta),t),\,\forall\theta\in\R.\]
	\end{itemize}
	
	In the cylinder model $\mathcal{C}$, the surface $\left\{t=0\right\}$ is the rotational minimal surface $\mathcal{U}$ known as umbrella; in the half-space model $\mathcal{H}$, the surface $\left\{t=0\right\}$ is the minimal surface, $\mathcal{S},$ invariant with respect to hyperbolic and parabolic translation.
	
	\textbf{Acknowledgement:} The author thanks the anonymous referees for their helpful comments that improved the quality of the manuscript.
	
	\section{Constant mean curvature surfaces invariant by a one-parameter group of isometries and their asymptotic behaviour}\label{sec:asymptotic-behaviour}
	
	In this section, we provide a complete classification for CMC surfaces that are invariant under either rotations or hyperbolic translations and are vertical graphs of functions of $\Omega$, in either the cylinder or the half-space model. The study of such surfaces has been conducted previously in \cite{SaETou05} and \cite{SaE08} for $\tau=0$ and in \cite{CuiMafPe15} and \cite{Pen12} for $\tau\neq0$.  In particular, our interest focus on the asymptotic behaviour of these invariant surfaces with respect to the surface $\left\{t=0\right\}$ in each model, and the relation between $\mathcal{S}$ and $\mathcal{U}$.
	
	To describe the surfaces invariant by the rotation $\varphi_4$ we will use the cylinder model $\mathcal{C}$ and the geodesic polar coordinates 
	\begin{equation*}
		\begin{cases}
			x(\rho,\theta)=\tanh\left(\tfrac{\rho}{2}\right)\cos(\theta),\\
			y(\rho,\theta)=\tanh\left(\tfrac{\rho}{2}\right)\sin(\theta),
		\end{cases}
	\end{equation*}
	of the base $\mathbb{H}^2$ of the Killing submersion. 
	So, any $H$-surface invariant by $\varphi_4$ is parametrized by
	\[\mathcal{R}^H_d(\rho,\theta)=\left(\tanh\left(\tfrac{\rho}{2}\right)\cos(\theta),\tanh\left(\tfrac{\rho}{2}\right)\sin(\theta),v^H_d(\rho)\right),\]
	where $v^H_d$ is any solution of the differential equation
	\begin{align*}
		\left(\coth(\rho)-16\tau^2\csch^3(\rho)\sinh^4\left(\tfrac{\rho}{2}\right)+4\tau^2\coth(\rho)\tanh^2\left(\tfrac{\rho}{2}\right)+\coth(\rho)\dot{v}^H_d(\rho)\right)\dot{v}^H_d(\rho)^2\\
		+\left(1+4\tau^2\tanh^2\left(\tfrac{\rho}{2}\right)\right)\overset{..}{v}^H_d(\rho)=2H\left(1+4\tau^2\tanh^2\left(\tfrac{\rho}{2}\right)+\dot{v}^H_d(\rho)^2\right)^{\tfrac{3}{2}}.
	\end{align*}
	As it is shown in \cite{Pen12}, the solution is given by the one-parameter family of integral functions
	\begin{equation}\label{eq:rotationalCMCgraphs}
		v^H_d(\rho)=\int_*^\rho\frac{(2H\cosh(r)+d)\sqrt{1+4\tau^2\tanh^2\left(\tfrac{r}{2}\right)}}{\sqrt{\sinh^2(r)-(2H\cosh(r)+d)^2}}dr,
	\end{equation}
	and,  depending on the choice of $d\in\R,$ we can obtain three type of surfaces:
	\begin{itemize}
		\item if $d>-2H,$ then $\mathcal{R}^H_d(\rho,\theta)$ is the upper half of a proper embedded annulus symmetric with respect to the slice $\left\{t=0\right\}$ (see \Cref{RotSurfA});
		\item if $d=-2H,$ then $\mathcal{R}^H_d(\rho,\theta)$ is an entire vertical graph contained in $\left\{t\geq0\right\}$ and tangent to $\left\{t=0\right\}$ at the point $(0,0,0)$ (see \Cref{RotSurfB});
		\item if $d<-2H,$ then $\mathcal{R}^H_d(\rho,\theta)$ is the half of a properly immersed (and non-embedded) annulus, symmetric with respect to slice $\left\{t=0\right\}$ (see \Cref{RotSurfC}).
	\end{itemize}
	
	\begin{figure}[h!]
		\centering
		\begin{subfigure}[b]{0.2\linewidth}
			\centering
			\includegraphics[scale=0.5]{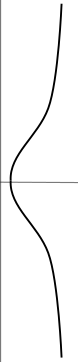}
			\caption{Embedded $H$-annulus}
			\label{RotSurfA}
		\end{subfigure}
		\begin{subfigure}[b]{0.5\linewidth}
			\centering
			\includegraphics[scale=0.5]{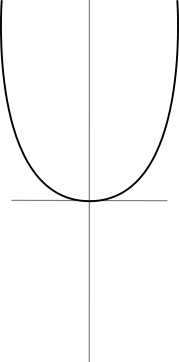}
			\caption{Entire rotational $H$-graph}
			\label{RotSurfB}
		\end{subfigure}
		\begin{subfigure}[b]{0.2\linewidth}
			\centering
			\includegraphics[scale=0.5]{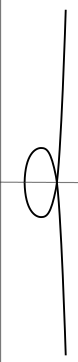}
			\caption{Immersed $H$-annulus}
			\label{RotSurfC}
		\end{subfigure}
		\caption{Generating curves of rotational $H$-surfaces.}
		\label{RotSurf}
	\end{figure}
	
	To study the hyperbolic invariant CMC surfaces, we consider the half-space model $\mathcal{H}$ and consider the hyperbolic translation $\varphi_2$ with respect to the horizontal geodesic $\left\{x=x_0\right\}$. These surfaces are vertical graphs parametrized by $\Theta_d^H(x,y)=\left(x,y,u_d^H\left(\tfrac{x}{y}\right)\right)$ (as it is done in \cite{Cas22} for $H=0$). If we denote by $s=\tfrac{x}{y}$, and assume that the normal points upwise when $H>0$ and downwise when $H<0$ one get that $w_d(s)=\dot{u}_d^H(s)$ satisfies the following equation:
	\begin{equation}\label{eq:hyperbolicCMCgraphs}
		\frac{\left[s^2(4\tau^2+1)+1\right]w_d'(s)+sw_d(s)\left[(s^2+1)w_d(s)^2-6\tau w_d(s)+8\tau^2+2\right]}{2\left[(s^2+1)w(s)^2-4\tau w_d(s)+4\tau^2+1\right]^{\tfrac{3}{2}}}=H.
	\end{equation}
	Integrating this equation we get that one-parameter family of solutions
	\begin{equation}\label{eq:solutioneqHUp}
		w_d(s)=\frac{2\tau}{s^2+1}-\sign{H}\frac{\sqrt{4s^2\tau^2+s^2+1}(d-2H)}{(s^2+1)\sqrt{1-(d-2Hs+s)(d-2Hs-s)}}.
	\end{equation}
	Depending on the number of the real roots of the polynomial $P(s)=1-(d-2Hs+s)(d-2Hs-s)$ we can distinguish three cases:	
	\begin{enumerate}[label=\roman*)]
		\item Case $-\sqrt{1-4H^2}<d<\sqrt{1-4H^2}$.
		
		In this case $P(s)$ has no real roots, so $w_d(s)$, and hence $u_d^H(s)$ is defined for all $s\in\R$. In particular, up to vertical translation we get 
		$$u_d^H(s)=2\tau\arctan(s)-\sign{H}\int_0^s\frac{\sqrt{4t^2\tau^2+t^2+1}(d-2Ht)}{(t^2+1)\sqrt{1-(d-2Ht+t)(d-2Ht-t)}}dt.$$
		Studying the asymptotic behaviour, we get that when $s\to\pm\infty$ $$w_d(s)\approx\sign{H}\left(\frac{2H(1+4\tau^2)}{\sqrt{1-4H^2}s}-\frac{d\sqrt{1+4\tau^2}}{(1-4H^2)^{\tfrac{3}{2}}s^2}+o(s^2)\right)$$ 
		and integrating we get
		\begin{equation}\label{eq:asymptoticbehaviourhyperbolicsolutions}
			u_d^H(s)\approx \sign{H}\left( \frac{2H(1+4\tau^2)}{\sqrt{1-4H^2}}\log(s)+\frac{d\sqrt{1+4\tau^2}}{(1-4H^2)^{\tfrac{3}{2}}s}+o(s^2)\right).
		\end{equation}
		In particular, $\lim_{s\to\pm\infty}v_d^H(s)=\sign{H}\infty$.

		\begin{figure}[h!]
			\begin{subfigure}[b]{0.4\linewidth}
				\begin{tikzpicture}[scale=0.6]
					\draw[->] (-4,0)--(-2,0) node[anchor=south] {$+\infty$}--(2,0) node[anchor=south] {$+\infty$}--(4,0) node[anchor=north] {$x$ axis};
					\draw[gray, very thin,->] (0,0)--(0,4) node[anchor=west] {$y$ axis};
					\node at (1,2) {$u_{d}^H$};
					\node at (-1.5,3.5) {$H>0$};
				\end{tikzpicture}
			\end{subfigure}
			\begin{subfigure}[b]{0.4\linewidth}
				\begin{tikzpicture}[scale=0.6]
					\draw[->] (-4,0)--(-2,0) node[anchor=south] {$-\infty$}--(2,0) node[anchor=south] {$-\infty$}--(4,0) node[anchor=north] {$x$ axis};
					\draw[gray, very thin,->] (0,0)--(0,4) node[anchor=west] {$y$ axis};
					\node at (1,2) {$u_{d}^H$};
					\node at (-1.5,3.5) {$H<0$};
				\end{tikzpicture}
			\end{subfigure}
			\caption{Entire $H$-graphs invariant by hyperbolic translations.}
		\end{figure}
		
		\item Case $d=\pm\sqrt{1-4H^2}$.
		
		In this case $P(s)$ has a unique root $s_0=\tfrac{2H}{d}$ and then, up to vertical translation, we have two solutions:
		$$u_{d,+}^H(s)=2\tau\arctan(s)-\sign{H}\int_{s_0+1}^{s}\frac{\sqrt{4t^2\tau^2+t^2+1}(d-2Ht)}{(t^2+1)\sqrt{1-(d-2Ht+t)(d-2Ht-t)}}dt,$$
		and
		$$u_{d,-}^H(s)=2\tau\arctan(s)-\sign{H}\int_{s}^{s_0-1}\frac{\sqrt{4t^2\tau^2+t^2+1}(d-2Ht)}{(t^2+1)\sqrt{1-(d-2Ht+t)(d-2Ht-t)}}dt.$$
		As in \eqref{eq:asymptoticbehaviourhyperbolicsolutions}, we can prove that $v_{d,\pm}^H(s)$ goes to $+\infty$ as $s$ goes to $\pm\infty$, while $$w_d(s)\approx\sign{H}\left(\frac{\sqrt{1+16\tau^2H^2}}{\sqrt{1-4H^2}}\frac{1}{s_0-s}\right)$$ when $s$ is near to $s_0$. In particular, one can distinguish two cases:
		\begin{enumerate}
			\item If $d=\sqrt{1-4H^2}$, then $$\lim_{s\to s_0^+}w_d(s)=-\sign{H}\infty,\quad \lim_{s\to s_0^-}w_d(s)=\sign{H}\infty,$$ and additionally $$\lim_{s\to s_0^-}u_{d,-}^H(s)=\sign{H}\infty, \qquad \lim_{s\to s_0^+}u_{d,+}^H(s)=-\sign{H}\infty.$$
			
			\begin{figure}[h!]
				\begin{subfigure}[b]{0.4\linewidth}
					\begin{tikzpicture}[scale=0.6]
						\draw[->] (-4,0)--(-2,0) node[anchor=south] {$+\infty$}--(2,0) node[anchor=south] {$+\infty$}--(4,0) node[anchor=north] {$x$ axis};
						\draw (0,0)-- (1.35,1.3) node[anchor=west] {$-\infty$} node[anchor=east] {$+\infty$} --(2.7,2.6) node[anchor=south] {$\tfrac{x}{y}=\frac{2H}{\sqrt{1-4H^2}}$} ;
						\node at (3.2,1) {$u_{d,+}^H$};
						\node at (-1,1.5) {$u_{d,-}^H$};
						\node at (-1,3) {$H>0$};
					\end{tikzpicture}
				\end{subfigure}
				\begin{subfigure}[b]{0.4\linewidth}
					\begin{tikzpicture}[scale=0.6]
						\draw[->] (-4,0)--(-2,0) node[anchor=south] {$-\infty$}--(2,0) node[anchor=south] {$-\infty$}--(4,0) node[anchor=north] {$x$ axis};
						\draw (0,0)-- (1.35,1.3) node[anchor=west] {$+\infty$} node[anchor=east] {$-\infty$} --(2.7,2.6) node[anchor=south] {$\tfrac{x}{y}=\frac{2H}{\sqrt{1-4H^2}}$} ;
						\node at (3.2,1) {$u_{d,+}^H$};
						\node at (-1,1.5) {$u_{d,-}^H$};
						\node at (-1,3) {$H<0$};
					\end{tikzpicture}
				\end{subfigure}
				
				\caption{Non-entire $H$-graphs invariant by hyperbolic translations (case $d=\sqrt{1-4H^2}$).}
			\end{figure}
			\item Analogously, if $d=-\sqrt{1-4H^2}$, then
			$$\lim_{s\to s_0^+}w_d(s)=\sign{H}\infty,\quad \lim_{s\to s_0^-}w_d(s)=-\sign{H}\infty,$$ and additionally $$\lim_{s\to s_0^-}u_{d,-}^H(s)=-\sign{H}\infty, \qquad \lim_{s\to s_0^+}u_{d,+}^H(s)=\sign{H}\infty.$$
			\begin{figure}[h!]
				\begin{subfigure}[b]{0.4\linewidth}
					\begin{tikzpicture}[scale=0.6]
						\draw[gray, very thin,->] (-4,0)--(-2,0) node[anchor=south] {$+\infty$}--(2,0) node[anchor=south] {$+\infty$}--(4,0) node[anchor=north] {$x$ axis};
						\draw (0,0)-- (-1.35,1.3) node[anchor=west] {$+\infty$} node[anchor=east] {$-\infty$} --(-2.7,2.6) node[anchor=south] {$\tfrac{x}{y}=-\frac{2H}{\sqrt{1-4H^2}}$} ;
						\node at (-3.1,1) {$u_{d,-}^H$};
						\node at (1,1.5) {$u_{d,+}^H$};
						\node at (1,3) {$H>0$};
					\end{tikzpicture}
				\end{subfigure}
				\begin{subfigure}[b]{0.4\linewidth}
					\begin{tikzpicture}[scale=0.6]
						\draw[gray, very thin,->] (-4,0)--(-2,0) node[anchor=south] {$-\infty$}--(2,0) node[anchor=south] {$-\infty$}--(4,0) node[anchor=north] {$x$ axis};
						\draw (0,0)-- (-1.35,1.3) node[anchor=west] {$-\infty$} node[anchor=east] {$+\infty$} --(-2.7,2.6) node[anchor=south] {$\tfrac{x}{y}=-\frac{2H}{\sqrt{1-4H^2}}$} ;
						\node at (-3.1,1) {$u_{d,-}^H$};
						\node at (1,1.5) {$u_{d,+}^H$};
						\node at (1,3) {$H<0$};
					\end{tikzpicture}
				\end{subfigure}
				
				\caption{Non-entire $H$-graphs invariant by hyperbolic translations (case $d=-\sqrt{1-4H^2}$).}
			\end{figure}
		\end{enumerate}
		These limits are motivated by the following proposition.
		\begin{proposition}\label{prop:H122}
			If $d=\pm\sqrt{1-4H^2}$ then
			\begin{equation*}
				\lim_{s\to s_0^-}u_{d,-}^H(s)=\pm\sign{H}\infty\qquad\textrm{and}\qquad\lim_{s\to s_0^+}u_{d,+}^H(s)=\mp\sign{H}\infty.
			\end{equation*}
		\end{proposition}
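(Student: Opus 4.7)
The plan is to exploit the asymptotic expansion of $w_d$ displayed immediately above the statement, together with the observation that $u_{d,\pm}^H$ is, up to the bounded contribution $2\tau\arctan(s)$, an antiderivative of $w_d$. Since that asymptotic has the form (nonzero constant)$/(s_0-s)$, the primitive develops a logarithmic divergence, and only the sign of the constant needs to be tracked to recover the four limits in the statement.

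First I would justify the displayed asymptotic itself, which is precisely where the assumption $d=\pm\sqrt{1-4H^2}$ enters. Under that hypothesis, $s_0$ is a \emph{double} root of the polynomial $P(s):=1+s^2-(d-2Hs)^2$ appearing under the square root in \eqref{eq:solutioneqHUp}: $P(s_0)=0$ by definition, while a direct computation using $d^2+4H^2=1$ yields $P'(s_0)=2s_0+4H(d-2Hs_0)=0$, and $P''(s)\equiv 2-8H^2=2d^2>0$ since $|H|<\tfrac{1}{2}$. Hence $\sqrt{P(s)}=|d|\,|s-s_0|+O((s-s_0)^2)$, and the simple pole this produces in $w_d$ gives, after simplifying with the identities $s_0^2+1=1/d^2$ and $d-2Hs_0=1/d$, exactly the asymptotic quoted just before the proposition.

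To conclude I would integrate the leading $\sim K/(s_0-s)$ term of $w_d$ (with $K=\sign{H}\sqrt{1+16\tau^2 H^2}/\sqrt{1-4H^2}$, up to the factor $\sign{d}$ that distinguishes the two choices $d=\pm\sqrt{1-4H^2}$) from the base points $s_0\mp 1$ that appear in the definitions of $u_{d,\pm}^H$. Each such integration produces a $-K\log|s-s_0|$, which tends to $+\infty$ as its argument tends to $0^+$; the four specific signs in the statement then come from keeping track of (i)~whether the base point lies to the left or to the right of $s_0$, which flips the sign once by reversing the orientation of the integral, and (ii)~whether $\sign{d}=+1$ or $-1$, which flips it once more. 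The main obstacle is not analytic but purely combinatorial: matching these four sign combinations to the conclusion without error; the divergence itself is an immediate corollary of the double-root computation in step one.
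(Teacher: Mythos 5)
Your argument is correct in its essential content but follows a genuinely different route from the paper. The paper proves the proposition by contradiction with the Boundary Maximum Principle: if the limit were a finite value $k$, the graph of $u_{d,+}^H$ would be tangent, along the horizontal curve at height $k$, to the vertical $H$-cylinder $\Sigma_H$ over the line $\{x/y=s_0\}$ (the graph becomes vertical there because $w_d$ blows up, and both surfaces have mean curvature vector pointing the same way), which is impossible; the sign of the resulting infinite limit is then read off from the monotonicity of $u_{d,\pm}^H$, i.e.\ from the sign of $w_d$ near $s_0$. You instead observe that the hypothesis $d^2+4H^2=1$ makes $s_0$ a double root of $P$ --- in fact $P(s)=(ds+2H)^2=d^2(s-s_0)^2$ exactly, so no $O((s-s_0)^2)$ remainder is needed --- whence $\sqrt{P}$ vanishes to first order, $w_d$ has a non-integrable simple pole with nonzero coefficient (since $d-2Hs_0=1/d\neq0$), and its primitive diverges logarithmically. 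Your route is more elementary and self-contained (no maximum principle needed) and makes transparent exactly where the hypothesis on $d$ enters; the paper's route is geometric and insensitive to the sign conventions hidden in the explicit integral formula.

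Two caveats. First, the factorization $P(s)=(ds+2H)^2$ shows that the root is $s_0=-2H/d$, not $2H/d$ as written in the text; your identities $P(s_0)=P'(s_0)=0$, $s_0^2+1=1/d^2$ and $d-2Hs_0=1/d$ all hold only for the former value, so you are implicitly (and correctly) working with the actual root, but you should say so. Second, the sign bookkeeping you defer as ``purely combinatorial'' is precisely the delicate point: with the principal branch $\sqrt{P(s)}=|d|\,|s-s_0|$ the singular part of $w_d$ has the \emph{same} sign on both sides of $s_0$, which forces $u_{d,+}^H$ and $u_{d,-}^H$ to diverge to \emph{opposite} infinities as they approach $s_0$ from their respective sides; carrying this out naively does not reproduce the four signs as displayed (nor are the displayed one-sided limits of $w_d$ obviously compatible, via $w_d=\dot u_{d,\pm}^H$, with the claimed limits of $u_{d,\pm}^H$). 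So you must decide which branch of the square root the solution $w_d$ actually follows on each side of $s_0$ before the sign matching can be completed. The divergence itself --- the substantive content of the proposition --- is fully established by your double-root computation.
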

		\begin{proof}
			We study only the case of $u_{d,+}^H,$ for $d=-\sqrt{1-4H^2}$ and $H>0$, since the others are analogous.
			We call $\Sigma_H=(-2H e^u,\sqrt{1-4H^2}e^u,v)_{\left\{u,v\right\}\in\R}$ and denote by $\Theta_d^H$ the graph of $u_{d,+}^H$.
			
			Suppose that there exist $k\in\R,$ such that $\underset{s\to s_0^+}{\lim}u_{d,+}^H(s)=k,$ then $\Theta^H_d(\phi,\theta)$ and $\Sigma_H(\varphi,t)$ are two surfaces of constant mean curvature $H,$ with mean curvature vector pointing in the same direction, which are tangent along the curve $(\cos(\theta_0) e^\phi,\sin(\theta_0)e^\phi,k)$ (see \Cref{ContrEx}). This contradicts the Boundary Maximum Principle and concludes the proof. 
			\begin{figure}[h!]
				\centering
				\includegraphics[width=0.5\linewidth]{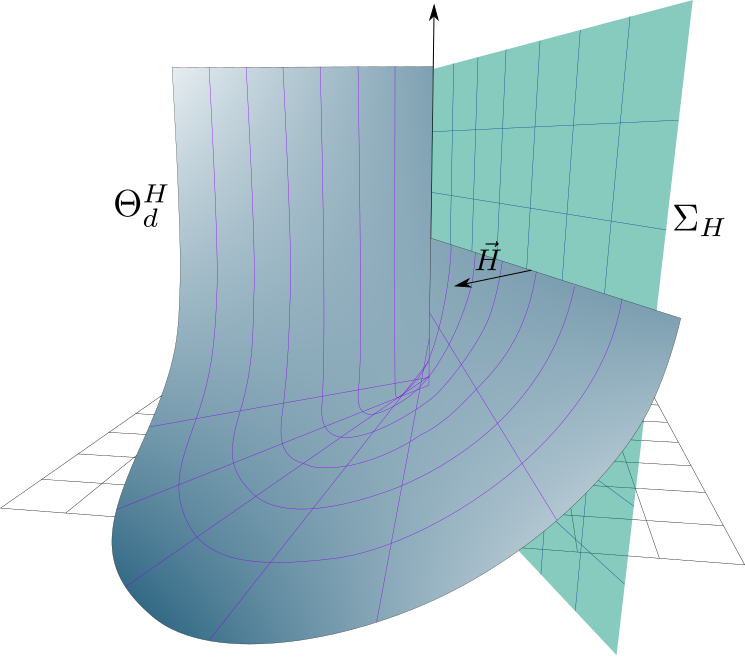}
				\caption{$\Sigma_H $ is tangent to $\Omega^H_d$ if $\underset{s\to s_0^+}{\lim}u_{d,+}^H(s)=k.$}
				\label{ContrEx}
			\end{figure}	
			
		\end{proof} 
		\item Case $d^2+4H^2>1$.
		
		In this case the polynomial $P(s)$ has two roots: $s_0^\pm=\frac{2dH\pm\sqrt{d^2+4H^2-1}}{4H^2-1}$ and then, up to vertical translation, we have again two solutions:
		$$u_{d,+}^H(s)=2\tau\arctan(s)-\sign{H}\int_{s_0^++1}^{s}\frac{\sqrt{4t^2\tau^2+t^2+}(d-2Ht)}{(t^2+1)\sqrt{1-(d-2Ht+t)(d-2Ht-t)}}dt,$$
		and
		$$u_{d,-}^H(s)=2\tau\arctan(s)-\sign{H}\int_{s}^{s_0^--1}\frac{\sqrt{4t^2\tau^2+t^2+}(d-2Ht)}{(t^2+1)\sqrt{1-(d-2Ht+t)(d-2Ht-t)}}dt.$$
		The asymptotic behaviour of $u_{d,\pm}^H(s)$ for $s\to\pm\infty$ can be studied as in \eqref{eq:asymptoticbehaviourhyperbolicsolutions}. On the other hand, it is easy to see that $w_d(s)\approx \sign{H}C^{\pm}\frac{1}{\sqrt{s-s_0^{\pm}}}$ when $s$ is near to $s_0^{\pm}$, for some constant $C^{\pm}$. In particular, $u_{d,\pm}^H$ is finite and vertical approaching $s_0^\pm$. We can notice that, up to vertical translation, the graph of $u_{d,\pm}^H\left(\tfrac{x}{y}\right)$ is the analytic extension of $u_{-d,\pm}^{-H}\left(\tfrac{x}{y}\right)$; if we glue them together, the surfaces we obtain are analogous to the tall rectangles surfaces (described for example in \cite{Cas22}) and their asymptotic boundary consists of two vertical lines in $\partial\mathbb{H}^2\times\R$.
	\end{enumerate}
	
	We now study the asymptotic behaviour of invariant $H$-surfaces. To do so, we compute the maximum of the height growth of each graph with respect to the slice $\left\{t=0\right\}$ as a function of geodesic radius of the base $\mathbb{H}^2.$
	
	The study of $\mathcal{R}^H_{d}$ is easy, since the growth function $v_{d}^H$ is already defined as a function of the geodesic radius of $\mathbb{H}^2.$ So, we only need to study its behaviour for $\rho\to\infty.$ An easy computation implies that
	\[\dot{v}^H_{d}(\rho)\approx \frac{2H\sqrt{1+4\tau^2}}{\sqrt{1-4H^2}}
	+ \frac{2 (d + 4 d \tau^2 + 8 H (-1 + 4 H^2) \tau^2)}{(1 - 4 H^2)^{\tfrac{3}{2}} \sqrt{1 + 4 \tau^2}} e^{-\rho}
	+o\left(e^{-\rho}\right),\] 
	and integrating this function we get 
	\begin{equation}\label{eq:Asymptotic-Rot}
		v^H_{d}(\rho)\approx \frac{2H\sqrt{1+4\tau^2}}{\sqrt{1-4H^2}}\rho
		-\frac{2 (d + 4 d \tau^2 + 8 H (-1 + 4 H^2) \tau^2)}{(1 - 4 H^2)^{\tfrac{3}{2}} \sqrt{1 + 4 \tau^2}}e^{-\rho}
		+o\left(e^{-\rho}\right).
	\end{equation}
	
	In a similar way we study the growth of $\Theta^H_{d}$. First we have to notice that growth function $u^H_{d}(s)$ is not written with respect to the geodesic radius of the base, so we need to use the change $s(\rho)=\sinh(\rho)$, obtaining
	\[\dot{u}^H_d(\rho)=\frac{1}{\cosh(\rho)}\left(2\tau+\frac{(-d+2H\sinh(\rho))\sqrt{1+(1+4\tau^2)\sinh^2(\rho)}}{\sqrt{1-d^2+\sinh(\rho)(4dH+\sinh(\rho)-4H^2\sinh(\rho))}}\right),\]
	that can be approximated by the function
	\[\dot{u}^H_d(\rho)\approx\pm\frac{2H\sqrt{1+4\tau^2}}{\sqrt{1-4H^2}}+\left(2\tau-2d\sqrt{\frac{1+4\tau^2}{(1-4H^2)^3}}\right)e^{\mp r}+o(e^{\mp r})\]
	when $\rho\to\pm\infty$. Integrating we obtain that 
	\begin{equation}
		\label{eq:approxu}
		u^H_d(\rho)\approx\pm\frac{2H\sqrt{1+4\tau^2}}{\sqrt{1-4H^2}}\rho\pm\left(2\tau-2d\sqrt{\frac{1+4\tau^2}{(1-4H^2)^3}}\right)e^{\mp r}+o(e^{\mp r})
	\end{equation}
	when $\rho\to\pm\infty$.
	\begin{remark}\label{rem:altregeo}
		Notice that, if we want to study the growth of $\Theta_d^H$ with respect to the radial geodesics outgoing from a fixed point $(x_0,y_0)$ with tangent vector $\left(\frac{y_0}{\cosh(\phi)},-y_0\tanh(\phi)\right)_{\phi\in\R}$, we need to use the change $$s(\rho)=\frac{x_0\cosh(\rho+\phi)+y_0\sinh(\rho)}{y_0\cosh(\phi)}.$$
		A direct computation implies that, even using this change, $u_d^H(\rho)$ is approximated by the same function \eqref{eq:approxu} for $|\rho|$ sufficiently large.
	\end{remark}
	
	The last thing to check is the asymptotic relation between $\mathcal{U}$ and $\mathcal{S}$, that will allow us to compare $\mathcal{R}^H_{d_1}$ and $\Theta^H_{d_2}.$ To do so, we use \eqref{iso:CtoH} to write $\mathcal{U}$ in the half-space model $\mathcal{H}$ and compare it to $\mathcal{S},$ that is $\left\{t=0\right\}.$ So, in the cylinder model $\mathcal{C},$ we parametrize $\mathcal{U}$, that is $\left\{t=0\right\},$ by 
	\[\mathcal{U}(x,y)=\left(\frac{-1+x^2+y^2}{x^2+(y+1)^2},\frac{-2x}{x^2+(y+1)^2},0\right),\] with $(x,y)\in\left\{\R^2\mid y>0\right\}.$ Hence, the image of $\mathcal{U}$ by the isometry \eqref{iso:CtoH} is \[(\psi^{-1}\circ \mathcal{U})(x,y)=\left(x,y,4\tau\arctan\left(\tfrac{x}{y+1}\right)\right)\] (see \cite{Cas22} for more details).
	\begin{figure}[h!]
		\centering
		\includegraphics[width=0.6\linewidth]{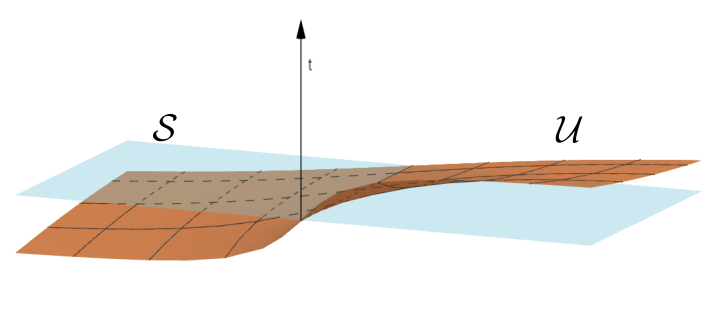}
		\caption{The umbrella surface $\mathcal{U}$ in the half-space model $\mathcal{H}$.}
		\label{UinH}
	\end{figure}	
	
	In particular we have that the vertical distance between $\mathcal{U}$ and $\mathcal{S}$ is bounded (see \Cref{UinH}). This and Remark~\ref{rem:altregeo} imply that, fixed $H\in\left(-\tfrac{1}{2},\tfrac{1}{2}\right),$ for any $d_1,d_2\in\R$ the asymptotic vertical distance between $\mathcal{R}^H_{d_1}$ and $\Theta^H_{d_2}$ is bounded by a constant.
	
	\section{A new model for $\E(-1,\tau)$}\label{sec:newModel}
	In this section we use the Killing Submersions theory to describe a new cylindrical model {$\mathcal{C}_H$ ($H$-cylindrical model)}  for $\E(-1,\tau),$ in which the slices $\left\{t=c\right\}_{c\in\R}$ are rotational surfaces of fixed constant mean curvature $-\tfrac{1}{2}\leq H\leq\tfrac{1}{2}$. 
	
	In general, if $\mathcal{D}\subset\R^2$ is a simply connected domain endowed with the conformal metric $\lambda^2(dx^2+dy^2)$, with $\lambda\in\mathcal{C}^\infty(\mathcal{D})$, and $\mathcal{D}\times\R$ is such that $\pi\colon \mathcal{D}\times\R\to \mathcal{D}$ is a unitary Killing submersion with bundle curvature $\tau\in\mathcal{C}^\infty(\mathcal{D})$, then $ \mathcal{D}\times\R$ is endowed with the metric
	\[ds^2=\lambda^2(dx^2+dy^2)+[\lambda(a(x,y)dx+b(x,y)dy)+dt]^2,\]
	where $a,b\in\mathcal{C}^\infty( \mathcal{D})$ satisfy the bundle curvature equation \[\frac{(\lambda b)_x-(\lambda a)_y}{2\lambda^2}=\tau.\] 
	For any fixed surface $\Sigma=(x,y,d(x,y))$ in $\left(\mathcal{D}\times\R,ds^2\right)$, described by the function $d\in\mathcal{C}^\infty( \mathcal{D}),$ that is an entire graph over $\mathcal{D}$, the map
	\begin{equation}\label{Iso}
		\begin{array}{rccc}
			\mathcal{I}_d\colon&\left(\mathcal{D}\times\R,ds^2\right)&\to& \left(\mathcal{D}\times\R,ds_1^2\right)\\
			&(x,y,t)&\mapsto&(x,y,t-d(x,y))
		\end{array}
	\end{equation}
	fixes the fibers of the submersion and send $\Sigma$ to the slice $\left\{t=0\right\}$ in $ \mathcal{D}\times\R$. Here, the metric $ds_1^2$ is again of the form
	\[ds_1^2=\lambda^2(dx^2+dy^2)+[\lambda(a'(x,y)dx+b'(x,y)dy)+dt]^2\]
	with $a',b'\in\mathcal{C}^\infty(\mathcal{D})$ satisfying \[\frac{(\lambda b')_x-(\lambda a')_y}{2\lambda^2}=\tau.\]
	The uniqueness result for Killing Submersions (see \cite[Theorem~2.6]{LerMan17}) implies that $\mathcal{I}_d$ is an isometry and $a$ and $ a'$ and $b$ and $b'$ satisfy the following relation:
	\[b'=b+\frac{d_y}{\lambda},\qquad a'=a+\frac{d_x}{\lambda}.\] 
	
	We now apply this idea to $\E(-1, \tau),$ described with the cylindrical model $\mathcal{C}.$ For any fixed $H\in\left[-\tfrac{1}{2},\tfrac{1}{2}\right]$, we consider $\Sigma$ to be the entire rotational $H$-graph $\mathcal{R}^H_{-2H}(\rho,\theta)$. So, we get that $\E(-1,\tau)$ is isometric to $\Omega\times\R,$ endowed with the metric
	\begin{equation}\label{RotMetric}
		ds^2=\lambda_{\mathcal{C}}(x,y)(dx^2+dy^2)+\left[\lambda_{\mathcal{C}}(x,y)\left[a_{\mathcal{C}}^H(x,y) dx +b_{\mathcal{C}}^H(x,y) dy\right]+dt\right]^2,
	\end{equation}
	where
	\begin{equation*}
		\begin{array}{lr}
			a_{\mathcal{C}}^H(x,y)=2 y \tau + 2 H x \sqrt{\frac{1 + 4 (x^2 + y^2) \tau^2}{ 
					1 - 4 H^2 (x^2 + y^2)}},  &
			b_{\mathcal{C}}^H(x,y)=-2 x \tau + 2 H y \sqrt{\frac{1 + 4 (x^2 + y^2) \tau^2}{ 
					1 - 4 H^2 (x^2 + y^2)}}.
		\end{array}
	\end{equation*}
	
	In this model the unit upwise normal vector field of the graph of a function $u\colon U\subset\Omega\to\R$ is given by
	\begin{equation*}
		N_u(x,y)=\frac{1}{\sqrt{1+\|G^Hu\|^2}}(G^Hu+\partial_t),
	\end{equation*}
	where  $\|\cdot\|$ is the norm of $\mathbb{H}^2$ in the Poincar\'e disk metric $\lambda_{\mathcal{C}}^2(dx^2+dy^2),$ and \[G^Hu= \tfrac{1}{\lambda_{\mathcal{C}}^2}\left[(u_x-\lambda_{\mathcal{C}} a_{\mathcal{C}}^H)\partial_x+(u_y-\lambda_{\mathcal{C}} b_{\mathcal{C}}^H)\partial_y\right].\]
	From this we deduce that the foliation $\left\{t=c\right\}_{c\in\R}$ is asymptotically transversal to the fibers whenever $|H|<\tfrac{1}{2}$:
	\[\lim_{x^2+y^2\to1}\Prod{N_c(x,y),\partial_t}=\lim_{x^2+y^2\to1}\frac{1}{\sqrt{1+(\lambda_{\mathcal{C}} a_{\mathcal{C}}^H)^2+(\lambda_{\mathcal{C}} b_{\mathcal{C}}^H)^2}}=\frac{1-4H^2}{2H\sqrt{1+4\tau^2}}.\]
	We also deduce that the mean curvature equation of a Killing graph described by $u$ is 
	\begin{equation}\label{RotHMeanCurvature}
		H(u)=\tfrac{1}{2}\Div\left(\frac{G^Hu}{\sqrt{1+\|G^Hu\|^2}}\right),
	\end{equation}
	where $\Div(\cdot)$ and $\|\cdot\|$ are respectively the divergence and the norm of $\mathbb{H}^2$ in the Poincar\'e disk metric. 
	\begin{remark}
		In \cite{DacLi09}, an existence and uniqueness result is established for prescribed mean curvature graphs with bounded boundary values over compact domains in any Killing submersion. This result relies on the classical Leray-Schauder theory, which is based on the availability of gradient estimates. As a consequence, the Arzelà-Ascoli Theorem yields the so-called compactness theorem, ensuring the existence of a limit for a sequence of solutions to the prescribed mean curvature equation, given uniform bounds. For more details about the existence result and the Compactness Theorem see \cite[Chapter 2 and Appendix]{DelThesis}.
		
		{Notice that a vertical graph in $\mathcal{C}_H$ is also a vertical graph in $\mathcal{C}$. In particular, if the function $u$ define the graph in $\mathcal{C}_h$, its counterpart in $\mathcal{C}$ is defined by the function $u+ \mathcal{R}^H_{-2H}$}
	\end{remark}
	
	\begin{remark}\label{rem:Hcomp}
		In this new model we can consider a product compactification, called \emph{$H$-compactification}, of $\E(-1,\tau)$
		\[\partial^H_\infty\H^2\times\R\cup\H^2\times\left\{+\infty\right\}\cup\H^2\times\left\{-\infty\right\}\] 
		such that $\partial^H_\infty\H^2\times\R$ is folieted by the asymptotic boundary of the elemets of a foliation by $H$-surfaces of $\E(-1,\tau)$ (for instance, the family $\left\{t=c\right\}_{c\in\R}$ satisfies this property).
		Notice that the isometry $\mathcal{I}\colon\E(-1,\tau)\to\E(-1,\tau)$ extends to the compactification sending the standard compactification into the $H$-compactifaction, leaving the cups $\H^2\times\left\{\pm\infty\right\}$ fixed, collapsing $\partial_\infty\H^2\times\R$ onto $\partial_\infty\H^2\times\left\{\pm\infty\right\}$ (depending on the sign of $H$) and expanding $\partial_\infty\H^2\times\left\{\mp\infty\right\}$ in $\partial_\infty\H^2\times\R$. (See Figure~\ref{Hcompactification} for an illustrated example).
	\end{remark}
	\begin{figure}[h!]
		\centering
		\includegraphics[width=0.4\linewidth]{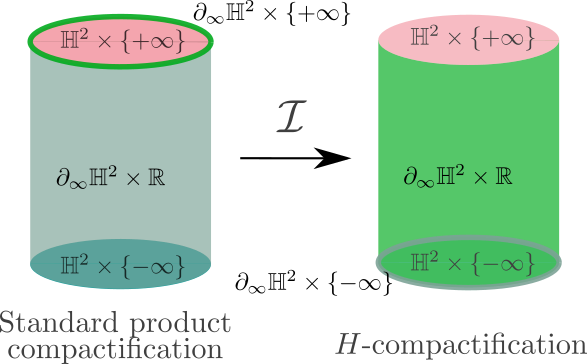}
		\caption{When $H>0$, the isometry $\mathcal{I}$ send the standard product compactification into the $H$-compactification fixing $\H^2\times\left\{+\infty\right\}$ and $\H^2\times\left\{-\infty\right\}$ and sending $\partial_\infty\H^2\times\left\{+\infty\right\}$ into $\partial_\infty\H^2\times\R$ and $\partial_\infty\H^2\times\R$ into $\partial_\infty\H^2\times\left\{-\infty\right\}$.}
		\label{Hcompactification}
	\end{figure}	
	
	\begin{definition}
		We say that a curve $\Gamma\in\partial_\infty\H^2\times\R$ in the $H$-compactification of $\E(-1,\tau)$ is in the asymptotic $H$-boundary of a surface $\Sigma$ if for any point $p\in\Gamma$ there exists a sequence $\left\{q_i\right\}$ of points of $\Sigma$ such that $\lim_{i\to\infty}q_i=p$.
		
		If $\mathcal{D}\subset\Omega$ is an unbounded domain containing in its boundary a curve $\gamma\in\partial_\infty\Omega$ and $u\in\mathcal{C}^\infty(\Omega)$, we say that has asymptotic boundary value $f\in\mathcal{C}\gamma$ if, for any sequence $\left\{p_n\right\}\subset\Omega$ converging to $p\in\gamma$, $\left\{u(p_n)\right\}$ converges to $f(p)$. 
	\end{definition}

	 {\section{Construction of the asymptotic barriers}\label{sec:Barriers}
	In this section, we describe the asymptotic $H$-barriers that will be used in the proof of the Main Theorems. Specifically, we describe the upper and lower barriers for any fixed $0<H<1/2$, noting that the construction for $0<H<-1/2$ is analogous but reversed: the description of the upper barriers for $H>0$ corresponds to that of the lower barriers for $H<0$, and vice versa.	}
	
	 {\subsection*{Lower asymptotic barriers} The main idea is to use the surfaces that are invariant by hyperbolic translation as lower asymptotic barriers. So, consider the $H$-cylindrical model $\mathcal{C}_H$ and, fixed a connected subset $\gamma^\infty\subset\partial\Omega$, let $\gamma^+$ and $\gamma^-$ the two curves with gedesic curvature equal to $2H$ connecting the endpoints of $\gamma^\infty$ and denote by $D^+$ (resp. $D^-$) the domain bounded by $\gamma^+\cup \gamma^\infty$ (resp. $\gamma^-\cup \gamma^\infty$) and $\partial \Omega$ such that $D^+$ is convex and $D^-$ is concave.
	We denote by $\omega^\pm\in\mathcal{C}^\infty(D^\pm)$ the function describing the $H$-surface invariant by hyperbolic translation that diverges to $\pm\infty$ along $\gamma^\pm$. Notice that, since all hyperbolic surfaces have the same asymptotic behaviour (see \eqref{eq:approxu}), it follows that, up to vertical translation, $\omega^+=\omega^-$ at $\gamma^\infty$. Furthermore, \Cref{rem:altregeo} and \eqref{eq:Asymptotic-Rot} implies that for any radial direction outgoing from a fixed point the rotational $H$-surfaces and the hyperbolic $H$-surface described by $\omega^-$ have the same growth. In particular, there exists a function $f\in\mathcal{C}^1(\gamma^\infty)$ such that $\omega^+=\omega^-=f$ at $\gamma^\infty$. Nevertheless, the surface described by $\omega^+$ contains an horizontal geodesic whose projection onto $D^+$ connects the endpoints of $\gamma^\infty$, which implies that the function $f$ goes to $-\infty$ approaching the vertices of $\gamma^\infty$.	
	\begin{figure}[h!]
		\begin{subfigure}[b]{0.3\linewidth}
			\centering
			\begin{tikzpicture}[scale=1.5]
				\draw[dashed] circle (1);
				\node at (0.85,0.85) {$\partial\Omega$};
				\draw (1,0) arc (60:120:2);
				\node at (-0.8,0.2) {$\gamma^-$};
				\node at (0.3,0.6) {$\omega_-$};
				\node at (0,0.15) {$-\infty$};
				\node at (0,0.85) {$f$};
			\end{tikzpicture}
		\end{subfigure}
		\begin{subfigure}[b]{0.3\linewidth}
			\centering
			\begin{tikzpicture}[scale=1.5]
				\draw[dashed] circle (1);
				\node at (0.85,0.85) {$\partial\Omega$};
				\draw (1,0) arc (-60:-120:2);
				\node at (-0.8,0.1) {$\gamma^+$};
				\node at (0.3,0.3) {$\omega_+$};
				\node at (0,-0.15) {$+\infty$};
				\node at (0,0.85) {$f$};
			\end{tikzpicture}
		\end{subfigure}
		\caption{$H$-surfaces invariant by hyperbolic translation in the $H$-compactification.}
	\end{figure}
	\begin{remark}
		The fact that the function $f$ is bounded above but not below makes the hyperbolic invariant $H$-surfaces not useful as upper barriers in the proof of the existence of entire $H$-graph with prescribed asymptotic values. Nevertheless, we can use them as upper barriers to solve the Jenkins-Serrin problem with the right hypothesis.
	\end{remark}}
	 
	 {\subsection*{Upper asymptotic barriers}
	In this subsection we want to prove the existence of upper asymptotic barriers to study the existence of entire $H$-graph with prescribed asymptotic values in the $H$-compactification of $\E(-1,\tau)$. In particular, we want to prove that for any given smooth curve $\Gamma$ in $\partial_\infty\Omega\times\R$ that projects bijectively onto $\partial_\infty\Omega$, for any $q\in\Gamma$ there exists a curve $\Gamma_1$ above $\Gamma$ in $\partial_\infty\Omega\times\R$ containing $q$ that is the asymptotic boundary of a rotational $H$-graph.
	To prove the previous statement we use the classical cylinder model $\mathcal{C}$ for $\E(-1,\tau)$, recalling that a graph $\Sigma_u$ of $u\in\mathcal{C}^\infty(\Omega)$ is asymptotic to a curve $\Gamma=f(\partial_\infty\Omega)$, for $f\in\mathcal{C}^\infty(\partial_\infty\Omega)$ in the $H$-cylinder model $\mathcal{C}_H$ if and only if the function $\bar{u}$ describing $\Sigma_u$ in the classical cylinder model $\mathcal{C}$ satisfies the following: for any $p\in\partial_\infty\Omega$ and any sequence $\left\{p_n\right\}\subset\Omega$ converging to $p$, \[\lim_{p_n\to p}\bar{u}(p_n)-\mathcal{R}_{-2H}^H(p_n)=f(p).\] 
	For the rest of this section we use the following notation:
	\begin{itemize}
		\item we denote by $u_0\in\mathcal{C}^\infty\Omega$ the rotational $H$-graph with center of rotation in $(0,0)\in\Omega$;
		\item we denote by $\mathcal{I}^\gamma_c(x,y,z)$ the hyperbolic translation with respect to the horizontal geodesic $\gamma(t)=\left\{0,-\tanh\left(\tfrac{t}{2}\right),0\right\}$;
		\item for any $c>0$, we denote by $u_c\in\mathcal{C}^\infty\Omega$ the function the describing the rotational $H$-surface $\mathcal{I}_c^\gamma(x,y,u_0(x,y))$ with center of rotation in $\gamma(c)$.
	\end{itemize}}
	
	 {For the sake of completeness, it follows from a direct computation that the hyperbolic translation with respect to the horizontal geodesic $\gamma(t)$ is given by
	\begin{eqnarray*}
			\mathcal{I}_c^\gamma(x,y,z)=&\{\frac{-2x}{-1+x^2+y^2-(1+x^2+y^2)\cosh(c)+2y\sinh(c)},
			\frac{(1+x^2+y^2)\sinh(c)-2y \cosh(c)}{-1+x^2+y^2-(1+x^2+y^2)\cosh(c)+2y\sinh(c)},  \\&	\qquad\qquad\qquad\qquad			z-4\tau \arctan\left(\frac{x}{1+y}\right)+4\tau\arctan\left(\frac{2 e^c x}{x^2+(1+y)^2-e^c(x^2+y^2-1)}\right)\}.
	\end{eqnarray*}
	Notice that \Cref{eq:Asymptotic-Rot} guaranties that $u_c$ has radial asymptotic growth with slope $\frac{2H\sqrt{1+4\tau^2}}{\sqrt{1-4H^2}}$ with respect to $\mathcal{I}_c^\gamma(x,y,0)$, for any $c\in\R$. Furthermore, a direct computation implies that for any $c\in\R$ \[\left|- \arctan\left(\frac{x}{1+y}\right)+\arctan\left(\frac{2 e^c x}{x^2+(1+y)^2-e^c(x^2+y^2-1)}\right)\right|\leq\left|\arctan\left(e^{\tfrac{-c}{2}}\right)-\arctan\left(e^{\tfrac{c}{2}}\right) \right|,\] that is the vertical distance between $(x,y,0)$ and $\mathcal{I}_c^\gamma(x,y,0)$ is bounded for any $c\in\R$. These two conditions guaranty that there exists a continuous function $f_c\in\mathcal{C}(\partial_\infty\Omega)$ such that for any point $p\in\partial_\infty\Omega$ and any sequence $\left\{p_n\right\}\subset\Omega$ converging to $p$, $\lim_{p_n\to p}u_c(p_n)-u_0(p_n)=f_c(p)$.}
	
	 {To obtain sufficient informations about $f$, we can study the difference between $u_0$ and $u_c$ along the projection of the geodesic $\gamma(t)$. In particular we have that, for $t>0$, $u_(\gamma(t)-u_c(\gamma(t))=v_{-2H}^H(t)-v_{-2H}^H(t-c)$, where $v_{-2H}^H$ is the function described in \Cref{eq:rotationalCMCgraphs}. To study $\lim_{t\to\infty}u_0(\gamma(t)-u_c(\gamma(t))$ we notice that, since $v_{-2H}^H$ is convex, and then there exists $k_t\in (0,c)$ such that \[\frac{v_{-2H}^H(t)-v_{-2H}^H(t-c)}{c}=\dot{v}_{-2H}^H(t-c+k_t).\] Hence,
	\[\lim_{t\to\infty}u_0(\gamma(t)-u_c(\gamma(t))=\lim{t\to\infty}\frac{v_{-2H}^H(t)-v_{-2H}^H(t-c)}{c} c=\lim_{t\to\infty}\dot{v}_{-2H}^H(t-c+k_t)\, c=\frac{2H\sqrt{1+4\tau^2}}{\sqrt{1-4H^2}} c\] using \Cref{eq:Asymptotic-Rot}.
	An analogous argument implies that \[\lim_{t\to-\infty}u_0(\gamma(t)-u_c(\gamma(t))=-\frac{2H\sqrt{1+4\tau^2}}{\sqrt{1-4H^2}} c.\]
	In particular, we have that for any fixed $c>0$, \[\min_{p\in\partial_\infty\Omega}f_c(p)\leq-\frac{2H\sqrt{1+4\tau^2}}{\sqrt{1-4H^2}} c\qquad\max_{p\in\partial_\infty\Omega}f_c(p)\geq\frac{2H\sqrt{1+4\tau^2}}{\sqrt{1-4H^2}} c\]
	Now fix $q\in\partial\infty\Omega$ and, for any $c>0$, let $p_c\in\partial_\infty\Omega$ the point such that $\min_{p\in\partial_\infty\Omega}f_c(p)=f_c(p_c)$ and  $Rot^q_c\colon\E(-1,\tau)\to\E(-1,\tau)$ be the rotation that fix $(x,y,u_0(x,y))$ and send $p_c$ to $q$ and denote by $u_c^q\in\mathcal{C}^\infty(\Omega)$ the function whose graph $Rot^q_c((x,y,u_c(x,y)))$ and $f_c^q\in\mathcal{C}^\infty(\partial_\infty\Omega)$ the function describing its asymptotic boundary. By construction for any $q\in\partial_\infty\Omega$ and $c>0$, we get $u^q_c-f^q_c(q)>u_0$ in $\Omega$ and \[\lim_{q_n\to q}u^q_c(q_n)-f^q_c(q)-u_0(q_n)=0\] for any sequence $\left\{q_n\right\}$ converging to $q$. Now \[\lim_{c\to\infty}u^q_c-f^q_c(q)-u_0\] diverges in all $\Omega$ since $-f^q_c(q)$ diverges to $+\infty$ for $c\to\infty$. In particular, we proved the following proposition in the $H$-cylinder model $\mathcal{C}_H$:
	\begin{proposition}\label{Prop:upperbarrier}
		Let $g\in\mathcal{C}^\infty(\partial_\infty\Omega)$ be a smooth function. Then, for any $q\in\partial_\infty\Omega$, there exists a $c$ sufficiently large such that $f_c^q> g$ in $\partial_\infty\Omega\setminus\left\{q\right\}$ and $g(q)=f^q_c(q)$ and there exists a rotationally invariant $H$-graph $\omega^{q}_c$ whose asymptotic boundary is $f^q_c(\partial_\infty\Omega)$.
	\end{proposition}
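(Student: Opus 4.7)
The plan is to combine a vertical translation with the divergence of the asymptotic amplitude of $f_c^q$ as $c\to\infty$ in order to construct the desired upper barrier. First, for each $c>0$, the rotational $H$-graph $u_c^q$ from the preceding paragraph has asymptotic boundary function $f_c^q\in\mathcal{C}^\infty(\partial_\infty\Omega)$ attaining its minimum at $q$. Since vertical translations are isometries of $\E(-1,\tau)$ that preserve both rotational invariance and constant mean curvature, replacing $u_c^q$ by $u_c^q+g(q)-f_c^q(q)$ yields a rotationally invariant $H$-graph $\omega_c^q$ whose asymptotic boundary function, still denoted $f_c^q$ for brevity, satisfies the normalization $f_c^q(q)=g(q)$.

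With this normalization in place, the strict inequality $f_c^q>g$ on $\partial_\infty\Omega\setminus\{q\}$ is equivalent to $f_c^q(p)-f_c^q(q)>g(p)-g(q)$ for every $p\neq q$, and this quantity is invariant under the translation carried out above. I would handle this in two regimes. On any compact set $K\subset\partial_\infty\Omega\setminus\{q\}$, the preceding discussion shows that $\min_{p\in K}\bigl(f_c^q(p)-f_c^q(q)\bigr)\to+\infty$ as $c\to\infty$: indeed, the Busemann-type description of $f_c^q$ that stems from \eqref{eq:Asymptotic-Rot}, together with the fact that under $\mathrm{Rot}_c^q$ the rotation center of the graph drifts toward the ideal point $q$ as $c\to\infty$, yields a uniform divergence on $K$. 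Since $g-g(q)$ is bounded on the compact manifold $\partial_\infty\Omega$, this gives the desired inequality on $K$ for $c$ sufficiently large.

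The delicate part is to control the situation in a small neighborhood of $q$, which is where I expect the main obstacle to lie. A Taylor expansion of $f_c^q$ at its minimum $q$ produces a quadratic leading term whose coefficient is of the order $(1-\tanh(c/2))^{-2}$, which diverges as $c\to\infty$, so its Hessian at $q$ can be made arbitrarily large. I would compare this with the Taylor expansion of the smooth function $g$ around $q$ and choose $c$ so large that the quadratic contribution from $f_c^q$ dominates the relevant terms of $g$ in a fixed punctured neighborhood of $q$. Combining this local estimate with the compact-set estimate above and choosing $c$ large enough for both to hold simultaneously produces the desired rotationally invariant $H$-graph $\omega_c^q$, whose asymptotic boundary is $f_c^q(\partial_\infty\Omega)$ and has the properties stated.
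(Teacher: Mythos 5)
Your vertical normalization and your treatment of the region away from $q$ are correct and coincide with the paper's own argument: since $f_c^q-f_c^q(q)\geq 0$ everywhere and the amplitude $\max f_c^q-\min f_c^q$ grows like $2c\,\tfrac{2H\sqrt{1+4\tau^2}}{\sqrt{1-4H^2}}$ up to bounded terms, the quantity $\min_K\bigl(f_c^q-f_c^q(q)\bigr)$ diverges on every compact $K\subset\partial_\infty\Omega\setminus\{q\}$, which beats the bounded function $g-g(q)$ there for $c$ large.

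The near-$q$ step, however, is where your argument breaks, and it cannot be repaired in the form you propose. By construction $q$ is an interior minimum of $f_c^q$, and $f_c^q$ is differentiable there (up to a $\mathcal{C}^1$ correction coming from the $\tau$-terms it is the Busemann-type function $K\log(\cosh c-\sinh c\cos\theta)$), so $(f_c^q)'(q)=0$ and $f_c^q(p)-f_c^q(q)=O(|p-q|^2)$, while $g(p)-g(q)=g'(q)(p-q)+O(|p-q|^2)$. A quadratic term $A|p-q|^2$ never dominates a linear one near $p=q$, no matter how large $A$ is: on the side where $g'(q)(p-q)>0$ one has $A|p-q|^2<|g'(q)|\,|p-q|$ for $0<|p-q|<|g'(q)|/A$, so $f_c^q<g$ on a nonempty (shrinking but never empty) punctured interval next to $q$ for every $c$. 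The difficulty you flagged as delicate is therefore a genuine obstruction whenever $g'(q)\neq 0$, and it is an obstruction to the statement itself, not only to your proof. What is actually provable — and what the paper uses in the proof of \Cref{thm:entire}, where the barrier is anchored at height $\varphi(q)+\varepsilon$ rather than $\varphi(q)$ — is the version with a gap at $q$: for every $\varepsilon>0$ there is $c$ large with $f_c^q(q)=g(q)+\varepsilon$ and $f_c^q>g$ on all of $\partial_\infty\Omega$. This follows from what you already have, with no Taylor expansion at all: near $q$ use the trivial bound $f_c^q\geq f_c^q(q)=g(q)+\varepsilon>g$ on the neighbourhood where $g<g(q)+\varepsilon$, and off that neighbourhood use your compact-set divergence. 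You should replace the local quadratic-versus-linear comparison by this observation, or else add the hypothesis $g'(q)=0$ (e.g.\ $g$ constant), under which your local step becomes vacuous.
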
}
	
	\section{Existence of entire $H$-graphs}\label{sec:EntireHGraphs}
	In this section  we consider the $H$-compactification of $\E(-1,\tau)$ in the $H$-cylinder model $\mathcal{C}_H$ given by $\Omega\times\R$ endowed with the metric in \eqref{RotMetric}, in which $\left\{t=0\right\}$ is a rotational surface of constant mean curvature $H\in\left(0,\tfrac{1}{2}\right)$ and we prove the existence of $H$-graph having a prescribed asymptotic $H$-boundary. We point out that the existence of non-invariant subcritical CMC graphs in $\mathbb{H}^2\times\R$ was already proven in \cite{FolRos22} using Scherk graphs over ideal domains. Our approach will allow us to have better control over the asymptotic behaviour of the $H$-graphs we will build.
	
	\begin{theorem}\label{thm:entire}
		Let $\Gamma$ be a rectifiable curve in $\partial\Omega\times\R$, that is the vertical graph of the function $\varphi\in\mathcal{C}(\partial\Omega)$. Then, there exists an entire $H$-graph having $\Gamma$ as asymptotic $H$-boundary. Such graph is unique.
	\end{theorem}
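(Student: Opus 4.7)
The plan is the classical exhaustion/Perron approach adapted to the $H$-cylindrical model $\mathcal{C}_H$. I would approximate the continuous asymptotic datum $\varphi$ by a sequence of smooth functions, exhaust $\Omega$ by an increasing sequence of smooth relatively compact disks $\{D_n\}_{n\in\N}$ with $\bigcup_n D_n=\Omega$, and solve on each $D_n$ a Dirichlet problem for the $H$-graph equation \eqref{RotHMeanCurvature} using the existence result of Dajczer--Lira (as recalled in the remark following \eqref{RotHMeanCurvature}). This produces a sequence $\{u_n\}\subset\mathcal{C}^\infty(D_n)$ of $H$-graphs with prescribed boundary values on $\partial D_n$. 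The heart of the argument is to control this sequence uniformly on compact subsets of $\Omega$, and simultaneously to control its asymptotic behaviour at each $q\in\partial_\infty\Omega$, so that the Compactness Theorem yields a limit $u\in\mathcal{C}^\infty(\Omega)$ whose graph has $\Gamma$ as asymptotic $H$-boundary.

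\textbf{Construction of the approximating solutions.} After extending $\varphi$ to a continuous function $\tilde\varphi\in\mathcal{C}(\overline\Omega)$, one sets the boundary value on $\partial D_n$ to be $\tilde\varphi|_{\partial D_n}$ and obtains $u_n\in\mathcal{C}^\infty(\overline{D_n})$ with $H(u_n)=H$ and $u_n|_{\partial D_n}=\tilde\varphi|_{\partial D_n}$. Uniform interior bounds will follow once two-sided asymptotic barriers are in place: the maximum principle then forces every $u_n$ to lie between the upper and lower barriers on $D_n$, and interior gradient estimates for the quasilinear mean curvature equation upgrade this to $\mathcal{C}^1_{\mathrm{loc}}$ bounds, whence the Compactness Theorem (cited from \cite{DelThesis}) extracts a subsequence converging in $\mathcal{C}^2_{\mathrm{loc}}(\Omega)$ to a smooth $H$-graph $u\in\mathcal{C}^\infty(\Omega)$.

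\textbf{Asymptotic control.} For each $q\in\partial_\infty\Omega$ and each $\varepsilon>0$, \Cref{Prop:upperbarrier} applied with $g=\varphi+\varepsilon$ (after smoothing $\varphi$) furnishes a rotational $H$-graph $\omega_c^q$ whose asymptotic $H$-boundary value function $f_c^q$ satisfies $f_c^q(q)=\varphi(q)+\varepsilon$ and $f_c^q>\varphi+\varepsilon$ elsewhere on $\partial_\infty\Omega$; this is the upper barrier at $q$. For a lower barrier at $q$, I would select a short asymptotic arc $\gamma^\infty\subset\partial_\infty\Omega$ around $q$ and use the hyperbolic-invariant surface $\omega^-$ described in \Cref{sec:Barriers}: after a suitable vertical translation one arranges that its asymptotic trace satisfies $f(q)=\varphi(q)-\varepsilon$ and $f\leq\varphi-\varepsilon$ on the whole arc (using continuity of $\varphi$ and the fact that $\omega^-\to-\infty$ at the endpoints of $\gamma^\infty$). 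Passing to the limit in the inequalities $\omega^--\text{const}\leq u_n\leq \omega_c^q+\text{const}$ on $D_n\cap D^-$, the squeeze at $q$ together with the freedom to take $\varepsilon\to 0$ shows that, for any sequence $\{p_k\}\subset\Omega$ with $p_k\to q$, the limit $u(p_k)\to\varphi(q)$. Hence the graph of $u$ has $\Gamma$ as asymptotic $H$-boundary.

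\textbf{Uniqueness and the main obstacle.} If $u_1,u_2\in\mathcal{C}^\infty(\Omega)$ are two entire $H$-graphs with the same asymptotic $H$-boundary $\Gamma$, the same barriers force $u_1-u_2\to 0$ uniformly as one approaches $\partial_\infty\Omega$. Therefore $\sup_\Omega|u_1-u_2|$, if positive, is attained at an interior point of $\Omega$, contradicting the strong maximum principle applied to the difference of solutions of the (quasilinear but uniformly elliptic along the path $tu_1+(1-t)u_2$) mean curvature equation. The delicate step in the whole argument is the construction of matched two-sided barriers at every asymptotic point: \Cref{Prop:upperbarrier} takes care of the upper side, but the lower barriers produced by hyperbolic translations require a careful choice of the arc $\gamma^\infty$ and a verification that, after vertical translation, they actually lie beneath the prescribed boundary data on the chosen arc; this is where the continuity (and in fact the rectifiability) of $\varphi$ plays a genuine role.
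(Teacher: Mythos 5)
Your proposal follows essentially the same architecture as the paper's proof: exhaustion of $\Omega$ by disks, the Dajczer--de Lira existence result on each $D_n$, the Compactness Theorem with a diagonal extraction, the rotational graphs of \Cref{Prop:upperbarrier} as upper asymptotic barriers at each $q$, and the hyperbolic-invariant surfaces $\omega^-$ on small arcs as lower asymptotic barriers. The squeeze argument at each boundary point is the same as the paper's.

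There is, however, one genuine soft spot: you derive the uniform $C^0$ bound on $\{u_n\}$ from the \emph{asymptotic} barriers, writing that ``uniform interior bounds will follow once two-sided asymptotic barriers are in place.'' This step does not work as stated. The lower barriers $\omega^-$ are defined only on the thin concave regions $D^-$ adjacent to an arc of $\partial_\infty\Omega$ and diverge to $-\infty$ along $\gamma^-$; a maximum-principle comparison on $D^-\cap D_n$ bounds $u_n$ from below only on that collar, and patching finitely many such collars still leaves a compact core of $D_n$ uncontrolled. Without an a priori two-sided $C^0$ bound on all of $D_n$ the compactness step cannot even begin. The paper's (much simpler) fix is the whole point of working in the model $\mathcal{C}_H$: the slices $\left\{t=c\right\}$ are themselves $H$-surfaces, so the interior tangency (maximum) principle applied to $u_n$ and the slices at heights $\min\varphi$ and $\max\varphi$ gives $\min\varphi\leq u_n\leq\max\varphi$ immediately, and the barriers are needed only afterwards, to identify the asymptotic boundary of the limit. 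Two further remarks: you sketch a uniqueness argument (decay of $u_1-u_2$ at $\partial_\infty\Omega$ plus the strong maximum principle) which the paper's written proof actually omits, and which is correct provided you observe that a positive supremum of $u_1-u_2$ must then be attained at an interior point; on the other hand you do not carry out the reduction from rectifiable to differentiable $\Gamma$, for which the paper approximates $\Gamma$ by differentiable curves from above and below following the correction of Nelli--Rosenberg.
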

	\begin{proof}	
		Assume first that $\Gamma$ is differentiable. For $n\in\mathbb{N}$, let $D_n\subset\Omega $ be the disk centered at the origin with Euclidean radius $1-\tfrac{1}{n+1},$ so that $\left\{D_n\right\}_n$ is an exhaustion of $\Omega.$ Since $\partial D_n$ has geodesic curvature greater than $2H$ for any $n\in \mathbb{N},$ for any $g\in\mathcal{C}(\partial D_n)$, there exists a $H$-graph $u\in\mathcal{C}^2(D_n)$ such that $u\equiv g $ in $\partial D_n$ {(see \cite[Theorem 1]{DacLi09}, or \cite[Theorem 2.1]{DelThesis})}. Let $\eta_n\colon\partial D_n\to\partial\Omega$ be such that $\eta_n(x,y)=\left(\tfrac{n+1}{n}x,\tfrac{n+1}{n}y\right)$ and $\varphi_n=\varphi\circ\eta_n.$ Hence, $\varphi_n(\partial D_n)$ is a sequence of curves lying between $\left\{t=\min\,\varphi\right\}$ and $\left\{t=\max\,\varphi\right\}$ and converging to $\Gamma$ for $n\to\infty.$ Denote by $u_n$ the solution of the Dirichlet problem for constant mean curvature $H$ in $D_n$ with $\varphi_n$ as boundary values and by $M_n$ its graph. Recall that in this setting the slice $\left\{t=c\right\}$ is an $H$-surface for any $c\in\R$. Then, the maximum principle implies that $\min\,\varphi\leq u_n\leq\max\,\varphi$ for any $n\in\mathbb{N}.$ 
		The Compactness Theorem implies that for each $n>1$ there exists subsequence $\left\{u^n_m\right\}\subset\left\{u^{n-1}_m\right\}$ that converges to an $H$-graph $u^n$ in $D_n$, where $u^{1}_m=u_m$. By construction, for each $n\in\N$, $u^{n+1}$ is the analytic extension of $u^n$ from $D_n$ to to $D_{n+1}$. In particular, it follows by a diagonal argument that the subsequence $\left\{u^n\right\}$ of $\left\{u_n\right\}$ converges to a solution $u\colon\Omega\to\R$ uniformly on compact subsets of $\Omega$, and we denote by $M$ its graph.
		
		 {We have that $\Gamma$ is contained in the asymptotic $H$-boundary $\partial M$ of $M$ by construction, so it remain to prove that if $p\in\partial\Omega\times\R\setminus\Gamma,$ then $p\notin\partial M.$ So we want to show that for $p\notin\partial M,$ there exists a surface that separates $p$ from all the element of $\left\{u_n\right\}$. We need to distinguish two cases: $p$ above $\Gamma$ and $p$ below $\Gamma$. Let us start with the first case. Without loss of generality, we can assume that $p=(1,0,\varphi(1,0)+2\varepsilon).$ We want to use the upper barriers described in the previous section, so let $q=p=(1,0,\varphi(1,0)+\varepsilon)$ and consider as upper barrier the function $\omega^q_c$ described in \Cref{Prop:upperbarrier}. If $c$ is sufficiently large, $f^q_c>\varphi$ in all $\partial_\infty\Omega$ and $\omega_c^q> u^n$ in $D_n$. }
		
		 {If $p$ is below $\Gamma$, without loss of generality we can assume $p=(1,0,\varphi(1,0)-2\varepsilon).$ Let $\varepsilon_1>0$ be such that $\underset{q\in\partial\Omega_{\varepsilon_1}}{\max}(\varphi((1,0))-\varphi(q))<\varepsilon,$ where $\partial\Omega_{\varepsilon_1}=\left\{(x,y)\in\partial\Omega\mid |y|<\varepsilon_1,x>0\right\}.$ Denote by $\varepsilon_2=\tfrac{\varphi_1}{3}$, by $\gamma_\infty=\left\{(x,y)\in\partial\Omega\mid |y|<\varepsilon_1,x>0\right\}$ and by $\gamma$ the curve of constant geodesic curvature $2H$ connecting the endpoints of $\gamma_\infty$ such that the domain $D_\gamma$ bounded by $\gamma\cup\gamma_\infty$ and consider in $\omega^-\in\mathcal{C}^\infty(D_\gamma)$ described in the previous section whose graph is an hyperbolic invariant $H$-surface that diverges to $-\infty$ along $\gamma$ and to the function $f$ along $\gamma_\infty$. After possibly a vertical translation and a rotation we can assume that $f$ attains his maximum in $(1,0)$ and that $f(1,0)=\varphi(1,0)-\varepsilon$. In this way, the maximum principle implies that $\omega^-<u_n$ in $D_\gamma\cap D_n$ for any $n\in\N$ and so $p$ can not be contained in $\partial M$.}

		Now, if $\Gamma$ is rectifiable, we consider two families of differentiable curves approximating $\Gamma$ respectively from above and from below, and use the argument in \cite[Correction 4(a)]{NelRos07} and this conclude the proof.
	\end{proof}
	\begin{remark}\label{RemSlab}
		The result of this theorem is consistent with the Slab Theorem in \cite{HauMenRod19}. Indeed, if we apply the inverse isometry $\mathcal{I}^{-1}$ and move back to the classical cylinder model $\mathcal{C}$ with $\left\{t=0\right\}$ being rotational and minimal, we have that all the solutions given by applying \Cref{thm:entire} are asymptotic to the entire rotational $H$-graph, that is, they diverge to $+\infty$ approaching the asymptotic vertical boundary of $\mathcal{C}.$ Furthermore, the previous result shows that a half-space Theorem analogous to \cite[Theorem 1]{NS} and \cite[Theorem~3]{Maz15} cannot be proven for $-\tfrac{1}{2}<H<\tfrac{1}{2}$ and that the hypothesis of \cite[Theorem~2]{MazWan15} are sharp.
	\end{remark}
	
	 {With an analogous argument we can prove the following result.
	\begin{theorem}\label{thm:unbouded}
		Let $D\subset\Omega$ be a convex domain with $\partial D=\gamma\cup\gamma_\infty$ is such that $\gamma_\infty\subset\partial\Omega$ and $\gamma$ has geodesic curvature greater or equal to $2H$ with respect to $D$. Then, for any $f\in \mathcal{C}(\partial D)$ piecewise continuous, there exists a solution to the Dirichlet Problem
		\[\begin{cases}
			\begin{array}{ll}
				H(u)=H&\textrm{in }D,\\
				u=f&\textrm{in }\partial D.
			\end{array}
		\end{cases}\]
	\end{theorem}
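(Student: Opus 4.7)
The plan is to mirror the strategy of \Cref{thm:entire}: exhaust $D$ by relatively compact subdomains whose boundary has geodesic curvature at least $2H$, solve the Dirichlet problem on each via \cite[Theorem~1]{DacLi09}, extract a subsequential limit using the Compactness Theorem, and verify the attained boundary values on $\gamma\cup\gamma_\infty$ via barriers. Concretely, fix $p_0\in D$ and set $D_n=D\cap B_n(p_0)$, where $B_n(p_0)\subset\H^2$ is the geodesic disk of radius $n$. Since geodesic circles in $\H^2$ have geodesic curvature $\coth(r)>1>2H$, and $\gamma$ has curvature at least $2H$ with respect to $D$ by hypothesis, after smoothing the corners of $\partial D_n$ while preserving the lower curvature bound one obtains smooth domains $D_n$ exhausting $D$ that meet the hypothesis of Dacorogna--Li.

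Choose continuous boundary data $f_n\in\mathcal{C}(\partial D_n)$ that coincide with $f$ on $\partial D_n\cap\gamma$ and smoothly interpolate along the new portion of $\partial D_n$ so that $\{f_n\}$ converges to $f$ on $\partial D$ in the natural sense. Dacorogna--Li produces $u_n\in\mathcal{C}^2(D_n)$ with $H(u_n)=H$ and $u_n=f_n$ on $\partial D_n$; since every slice $\{t=c\}$ in the $H$-cylindrical model is an $H$-surface, the maximum principle yields uniform bounds on compact subsets of $D$, and the Compactness Theorem gives a subsequence converging uniformly on compacta to an $H$-graph $u\in\mathcal{C}^2(D)$. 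By a diagonal argument as in the proof of \Cref{thm:entire}, this limit is the candidate solution.

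The main step is to show that $u$ attains $f$ at every continuity point of $\partial D$. On $\gamma$, at each such point $q_0$, small pieces of the invariant $H$-graphs of \Cref{sec:asymptotic-behaviour}, translated so that their projections are internally tangent to $\gamma$ at $q_0$, supply local upper and lower barriers; the curvature condition $\geq 2H$ on $\gamma$ is exactly what allows such pieces to fit against $\gamma$ from inside $D$. On $\gamma_\infty$, at a continuity point $q_0$, \Cref{Prop:upperbarrier} furnishes the upper barrier: for $c$ large enough, the rotational $H$-graph $\omega^{q_0}_c$, vertically translated so that its asymptotic value at $q_0$ is $f(q_0)$, strictly dominates $f$ at every other point of $\partial_\infty\Omega$ and hence dominates every $u_n$ by the maximum principle. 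A lower barrier comes from the hyperbolic-invariant $H$-surface $\omega^-$ of \Cref{sec:Barriers}: on a small concave subdomain $D_{\gamma'}\subset D$ whose asymptotic arc contains $q_0$, the graph of $\omega^-$ (translated so that its asymptotic value at $q_0$ equals $f(q_0)$) diverges to $-\infty$ along its finite boundary $\gamma'$ and therefore lies below every $u_n$ on $D_{\gamma'}\cap D_n$, giving $\liminf_{q\to q_0}u(q)\geq f(q_0)$. The main obstacle is reconciling these two genuinely different sorts of barriers along $\gamma$ and $\gamma_\infty$ simultaneously; piecewise continuity of $f$ causes no additional trouble, because the barrier comparison is performed only at continuity points, and jump discontinuities are addressed by one-sided control.
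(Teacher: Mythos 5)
Your proposal is correct and follows precisely the route the paper intends: the paper gives no separate proof of this statement, saying only that it follows ``with an analogous argument'' to \Cref{thm:entire}, and your exhaustion-plus-compactness scheme with the barriers of \Cref{sec:Barriers} (the rotational graphs of \Cref{Prop:upperbarrier} from above, the hyperbolic-invariant graphs $\omega^-$ on small concave subdomains from below, and local Serrin-type barriers along the finite boundary $\gamma$ using $\kappa_g(\gamma)\geq 2H$) is exactly that analogous argument. No gaps worth flagging.
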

	\begin{remark}
		Using the Perron Process (see \cite[Chapter 2.4]{DelThesis} for a detailed description) we extend this result and assume $D$ to be non-convex with re-entrant corners, that is, we assume $\gamma$ to be piecewise smooth with geodesic curvature greater or equal to $2H$ with respect to $D$ where it is defined.
	\end{remark}
	}
	\section{Asymptotic Jenkins--Serrin result}\label{App}
	
	Again we consider the model that describe the $H$-compactification of $\E(-1,\tau)$ given by $\Omega\times\R$ endowed with the metric in \eqref{RotMetric}, in which $\left\{t=0\right\}$ is a rotational surface of constant mean curvature $H\in\left(-\tfrac{1}{2},\tfrac{1}{2}\right).$ In this section we prove a Jenkins--Serrin type result for subcritical constant mean curvature graphs over unbounded domains of $\mathbb{H}^2.$ When $\tau=0$ this result was proven by Mazet, Rodriguez and Rosenberg (see \cite[Theorems~4.9,~4.12]{MazRodRos11}) for $H=0$ and by Folha and Melo (see \cite[Theorems~3.1,~3.2]{FolMe12}) for $0<H<\tfrac{1}{2}$ when the asymptotic boundary of the domain consists only of a finite number of isolated points of the asymptotic boundary of $\mathbb{H}^2.$ We extend these results to the case $\tau\neq0 $ allowing the asymptotic boundary of the domains to contain open arcs of the asymptotic boundary of $\mathbb{H}^2.$ 
	
	\begin{definition}[Jenkins--Serrin domain] \label{def:admissible domain}
		A simply connected domain $\mathcal{D}\subset\mathbb{H}^2$ is called a \emph{Jenkins--Serrin domain} (see \Cref{AdmissibleDomain}) if its boundary $\partial_\infty\mathcal{D}$ consists of a finite number of arcs $A_i$ and $B_i$ of constant geodesic curvature $2H$ and $-2H$ respectively with respect to $\mathcal{D}$, a finite number of arcs $C_i$ of geodesic curvature $k_g(C_i)\geq 2|H|$ and a finite number of open arcs $D_i\subset\partial_\infty\mathbb{H}^2,$ together with their endpoints, which are called the vertices of $\mathcal{D}.$
		
		If $H>0$ (resp. $H<0$), we say that a Jenkins--Serrin domain $\mathcal{D} $ is \emph{admissible} if 
		\begin{enumerate}
			\item[A)] neither two $A_i$ or two $B_i$ meets at a convex corner of $\mathcal{D},$ nor at an asymptotic vertex;
			\item[B)] denoting by $B_i^*$ (resp. $A_i^*$) the reflection of $B_i$ (resp. $A_i$) in $\mathbb{H}^2$ with respect to the geodesic passing through the endpoints of $B_i$ (resp. $A_i$), for any $i\in I,$ the interior of $B_i^*$ (resp. $A_i^*$) does not intersect $\overline{\mathcal{D}};$
			 {\item[C)] if $\left\{D_i\right\}\neq\emptyset$ and $H>0$ (resp. $H<0$),for any $i,j$, $D_j$ and $A_i$ (resp. $B_i$) have no common endpoints; 
			\item[D)]if $\left\{D_i\right\}\neq\emptyset$ and $H>0$ (resp. $H<0$), the curve $\gamma_i$ of constant geodesic curvature $2H$ such that the domain bounded by $\gamma_i\cup D_i$ is concave (resp. convex) is contained in $\mathcal{D}$.}
		\end{enumerate}	
		
		If $\mathcal{D}$ is an admissible Jenkins--Serrin domain, we call \emph{extended Jenkins--Serrin domain} the domain $\mathcal{D}^*$ whose boundary $\partial_\infty\mathcal{D}^*$ consists of the union of the arcs $A_i$, $B_i^*$, $C_i$ and $D_i$ (resp. $A_i^*$, $B_i$, $C_i$ and $D_i$).
		
	\end{definition}
	
	\begin{remark}
		Condition (A) of admissibility is a necessary condition that follows by applying the Flux Argument. Condition (B), is necessary to define the extended Jenkins--Serrin domain, where is possible to solve a Dirichlet problem with finite boundary data. { Condition (C) and (D) allows construction of the right upper/lower barriers, depending on the sign of $H$ (see \Cref{sec:Barriers}).
		Notice also that condition (D) is trivially satisfied when $\mathcal{D}$ does not admit re-entrant corners.}
	\end{remark}
	
	Without loss of generality we can assume $H>0$; the argument for $H<0$ is the same under minor changes on the definition of the admissible domain (see \Cref{def:admissible domain}) and on the choice of the upper and lower barriers (see \Cref{sec:Barriers}). We aim to give necessary and sufficient conditions such that the following Dirichlet problem admits a solution.
	
	\begin{definition}[Jenkins--Serrin Problem]
		Fin $H\in(0,1/2)$ and consider the $H$-cylinder model for $\E(-1,\tau)$.
		Let $\mathcal{D}\subset\Omega$ be an admissible Jenkins--Serrin domain, $f_i\in\mathcal{C}(C_i)$ and $g_i\in\mathcal{C}(D_i)$ be such that their images are rectifiable curves and $g_i$ are bounded above. Then, we call Jenkins--Serrin Problem the Dirichlet Problem \[\mathcal{P}_{JS}(\mathcal{D}):=
		\begin{cases}
			\begin{array}{ll}
				H(u)=H&\textrm{in }\mathcal{D},\\
				u=+\infty&\textrm{in }\cup A_i,\\
				u=-\infty&\textrm{in }\cup B_i,\\
				u=f_i&\textrm{in } C_i,\\
				u=g_i&\textrm{in } D_i.
			\end{array}
		\end{cases}\]
	\end{definition} 
	Before stating the theorem, in order to adapt the classical argument we need to define what an inscribed $H$-polygon is.
	\begin{definition}[Inscribed $H$-polygon]
		Let $\mathcal{D}$ be an admissible domain. We say that $P$ is an admissible inscribed $H$-polygon if $P \subset \mathcal{D}\cup\partial_\infty\mathcal{D},$ its edges have constant geodesic curvature $\pm2H$ and all the vertices of $P$ are vertices of $\mathcal{D}.$
	\end{definition}
	For each ideal vertex $p_i$ of $\mathcal{D}$ at $\partial_\infty\mathbb{H}^2,$ we consider a horocycle $H_i$ at $p_i.$	Assume $H_i$ is small enough so that it does not intersect bounded edges of $\partial_\infty\mathcal{D}$ and $H_i\cap H_j = \emptyset$ for every $i \neq j.$ 
	Let $F_i$ be the convex horodisk with boundary $H_i.$ Each $A_i$ meets exactly two
	horodisks. Denote by $\tilde{A}_i$ the compact arc of $A_i$ which is the part of $A_i$ outside the	two horodisks; we denote by $|A_i|$ (resp. $|B_i|$) the length of $\tilde{A}_i$ (resp. $\tilde{B}_i$).
	Let $P\subset\mathcal{D}$ be an inscribed $H$-polygon and denote by $\mathcal{D}_P\subset\mathcal{D}$ the domain bounded by $P.$ For each arc $\eta_j\in P,$ we define $\tilde{\eta}_j$ and $|\eta_j|$ in the same way.
	\[\gamma(P)=\sum_{j\in J}|\eta_j|,\qquad\alpha(P)=\sum_{A_i\subset P}| A_i|\quad\textrm{and}\quad\beta(P)=\sum_{B_i\in P}|B_i|.\]
	For any family of horocycles $\mathcal{H}=\left\{H_i\right\},$ denote by $F_i$ the convex horodisk bounded by $H_i,$ $\tilde{\mathcal{D}}_P=\bigcup_{i\in I}(\mathcal{D}_P\cap F_i)$, {where $i\in I$ if and only if the area $\mathcal{A}(\mathcal{D}_P\cap F_i)$ of $\mathcal{D}_P\cap F_i$ is finite} and $\mathcal{D}_P^\mathcal{H}=\mathcal{D}_P\setminus\tilde{\mathcal{D}}_P$ and we define \[\tilde{A}(\mathcal{D}_P):=\mathcal{A}(\mathcal{D}_P^\mathcal{H})+\mathcal{A}(\tilde{\mathcal{D}}_P).\]
	Notice that this definition makes sense, since $\tilde{\mathcal{A}}(\mathcal{D}_P)$ is finite (see \cite[Section~3]{FolMe12} for more details on the choice of the horodisks).
	
	\begin{figure}[h!]
		\centering
		\begin{tikzpicture}[scale=3]
			\draw[dashed] circle (1);
			\draw (1,0) arc (0:30:1);
			\draw (1,0) arc (120:180:1);
			\draw (0.866,1/2) arc (-60:-120:1);
			\draw (0.866-1,1/2) arc (85:118:0.5);
			\draw (-1/2,-0.866) arc (240:300:1);
			\draw (-1/2,-0.866) arc (-45:37:1);
			\node at (0.3,-0.2) {$\mathcal{D}$};
			\node at (0.35,0.45) {$C_1$};
			\node at (1.1,0.2) {$D_1$};
			\node at (0,-1.1) {$D_2$};
			\node at (-0.3,-0.2) {$B_1$};
			\node at (-0.35,0.55) {$A_1$};
			\node at (0.7,-0.4) {$B_2$};
		\end{tikzpicture}
		\caption{An example of admissible Jenkins-Serrin domain.}
		\label{AdmissibleDomain}
	\end{figure}
	\begin{theorem}\label{thm:JS}
		Let $\mathcal{D}$ be an admissible Jenkins--Serrin domain. Assume that both $\left\{C_i\right\}$ and $\left\{D_i\right\}$ are empty. Then, there exists a solution to the Dirichlet problem $\mathcal{P}_{JS}(\mathcal{D})$ if and only if for some choice of horocycles at the vertices we have
		\begin{equation}\label{eq:JScond0}
			\alpha(\partial\mathcal{D})=\beta(\partial\mathcal{D})+2H\tilde{\mathcal{A}}(\mathcal{D}),
		\end{equation}
		and for any inscribed $H$-polygon $P,$
		\begin{equation}\label{eq:JScond}
			2\alpha(P)<\gamma(P)+2H\tilde{\mathcal{A}}(\mathcal{D}_P)\qquad\textrm{and}\qquad2\beta(P)>\gamma(P)-2H\tilde{\mathcal{A}}(\mathcal{D}_P).
		\end{equation}
		
		If $\mathcal{D}$ is relatively compact, the solution is unique up to vertical translation.
		
		If either $\left\{C_i\right\}$ or $\left\{D_i\right\}$ is not empty, there exists a solution to $\mathcal{P}_{JS}(\mathcal{D})$ if and only if for some choice of horocycles at the vertices, the inequalities in \eqref{eq:JScond} are satisfied for any $H$-polygon $P$ inscribed in $\mathcal{D}.$
		If $\mathcal{D}$ is relatively compact, the solution is unique.
	\end{theorem}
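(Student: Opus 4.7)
The plan is to adapt the classical Jenkins--Serrin scheme, using the flux formula for the CMC equation in the $H$-cylindrical model together with the barriers built in \Cref{sec:Barriers}.

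\textbf{Necessity.} Assume a solution $u$ to $\mathcal{P}_{JS}(\mathcal{D})$ exists, and set $X_u=\frac{G^Hu}{\sqrt{1+\|G^Hu\|^2}}$, so that $\Div(X_u)=2H$ by \eqref{RotHMeanCurvature}. Given an inscribed $H$-polygon $P$, truncate $\mathcal{D}_P$ by the horocycles $H_i$ at its ideal vertices, apply the divergence theorem on the truncated domain, and then let $H_i$ shrink. The lengths of the horocyclic pieces cancel against those of the polygon's asymptotic horocyclic pieces in the limit, giving precisely $2H\,\tilde{\mathcal{A}}(\mathcal{D}_P)$ on the right-hand side. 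Since $\|X_u\|<1$ with $\langle X_u,\nu\rangle=+1$ on $A_i$-arcs of $P$, $\langle X_u,\nu\rangle=-1$ on $B_i$-arcs, and $|\langle X_u,\nu\rangle|<1$ elsewhere, one obtains the strict inequalities \eqref{eq:JScond}. When $\{C_i\}\cup\{D_i\}=\emptyset$, applying the same argument to the full boundary $\partial\mathcal{D}$ (with $P=\partial\mathcal{D}$) produces the equality \eqref{eq:JScond0}, since there is no free boundary to absorb the residual flux.

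\textbf{Sufficiency.} Exhaust $\mathcal{D}$ by a nested sequence $\{\mathcal{D}_n\}$ of relatively compact subdomains with piecewise smooth boundaries approximating the arcs $A_i,B_i,C_i,D_i$ from inside. On each $\mathcal{D}_n$, solve the Dirichlet problem for the $H$-equation with boundary data equal to $\min(n,$ suitable truncation$)$ on the $A_i$-pieces, $\max(-n,\cdot)$ on the $B_i$-pieces, the prescribed $f_i$ on $C_i$-pieces, a truncation of $g_i$ on the $D_i$-pieces, and bounded interpolating data on the auxiliary boundary; existence follows from \Cref{thm:unbouded} (via the Perron process). Next, build uniform barriers using \Cref{sec:Barriers}: the upper hyperbolic barriers $\omega^+$ bound $u_n$ from above near each $A_i$; the lower hyperbolic barriers $\omega^-$ bound $u_n$ from below near each $B_i$; the rotational barriers of \Cref{Prop:upperbarrier} control $u_n$ from above near $D_i$; condition (D) of admissibility is exactly what ensures that the asymptotic hyperbolic barriers along $D_i$ sit on the correct side of $\mathcal{D}$, while (B) does the same for the $A_i^*,B_i^*$ barriers. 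The Compactness Theorem extracts a subsequence converging, locally uniformly on $\mathcal{D}$, to a solution $u$ of $H(u)=H$.

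\textbf{Ruling out the degenerate limit and attaining the boundary values.} The most delicate step is excluding $u\equiv\pm\infty$ and verifying that $u$ attains $\pm\infty$ on $A_i,B_i$, $f_i$ on $C_i$ and $g_i$ on $D_i$. Following the divergence-line technique of \cite{MazRodRos11,FolMe12}, suppose $u_n\to+\infty$ on a component $U\subset\mathcal{D}$ after normalization by vertical translation; the boundary $\partial U$ is then a union of arcs along which $\langle X_{u_n},\nu\rangle\to +1$, producing in the flux balance an inscribed $H$-polygon $P$ violating the first inequality in \eqref{eq:JScond}. The symmetric argument rules out $u_n\to-\infty$. When $\{C_i\}\cup\{D_i\}=\emptyset$ one fixes the vertical translation so that equality \eqref{eq:JScond0} prevents both simultaneous divergences. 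Convergence of $u_n$ to the correct boundary data then follows from the monotone squeezing by the barriers above, using \eqref{eq:approxu} and \Cref{rem:altregeo} for the radial growth of the barriers along each geodesic. Uniqueness in the relatively compact case is the standard maximum principle argument applied to $u_1-u_2$ via the flux on $\{u_1>u_2\}$, modulo vertical translation in the $\{C_i\}\cup\{D_i\}=\emptyset$ case.

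\textbf{Principal obstacle.} The main novelty over \cite{FolMe12,MazRodRos11} is accommodating open arcs $D_i\subset\partial_\infty\H^2$ in the asymptotic boundary when $\tau\neq 0$, where the product structure used in previous works is unavailable. This forces the use of the $H$-compactification and of the hyperbolic-invariant barriers constructed in \Cref{sec:Barriers}; the hardest verification is that the flux contribution along the $D_i$-pieces (computed with the horocycle truncation) behaves as in the classical case, so that the polygonal inequalities \eqref{eq:JScond} genuinely suffice to prevent the limit from degenerating along $D_i$. Ensuring this requires the asymptotic estimates \eqref{eq:Asymptotic-Rot}--\eqref{eq:approxu} to match the flux contributions of $\omega^\pm$ near the endpoints of the $D_i$-arcs, which is why conditions (C)--(D) of admissibility appear.
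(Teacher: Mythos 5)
Your overall strategy coincides with the paper's: necessity via the flux formula on horocycle-truncated polygons, sufficiency via approximation by solvable Dirichlet problems, the barriers of \Cref{sec:Barriers}, the divergence-lines technique to rule out degeneration, and the Compactness Theorem. The necessity part and the divergence-line part of your sketch are essentially what the paper does.

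There is, however, one concrete gap in your sufficiency construction. You propose to solve, on each compact exhaustion domain, a Dirichlet problem with \emph{finite} data $\max(-n,\cdot)$ prescribed on (inner approximations of) the arcs $B_i$, invoking \Cref{thm:unbouded} and the Perron process. But the $B_i$ have geodesic curvature $-2H$ with respect to $\mathcal{D}$, i.e.\ they are concave arcs, and the existence results you cite (as well as \cite{DacLi09}) require each boundary arc carrying finite data to have geodesic curvature at least $2H$ with respect to the domain; no inner approximation of a concave arc can satisfy this. The paper circumvents this exactly through condition (B) of \Cref{def:admissible domain}: it passes to the extended domain $\mathcal{D}^*$ in which each $B_i$ is replaced by its reflection $B_i^*$ (convex with respect to $\mathcal{D}^*$), prescribes $-n$ on $B_i^*$, obtains a \emph{monotone decreasing} sequence of solutions on $\mathcal{D}^*$, and then shows that the restrictions to $\mathcal{D}$ diverge to $-\infty$ on $B_i$ by comparison with an auxiliary graph on the lune between $B_i$ and $B_i^*$. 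You mention the reflected arcs only in passing as ``barriers,'' but they are in fact where the approximating data must be placed; without this device the approximating problems you write down need not be solvable. Relatedly, the paper's Step 1 is organized as monotone increasing/decreasing sequences (first $\{B_i\}=\emptyset$, then $\{A_i\}=\emptyset$, then the general case by squeezing one limit between the other two), and this monotonicity is what makes the convergence/divergence dichotomy and the boundary-value verification clean; a generic exhaustion with interpolating data does not give this for free. The rest of your outline (conditions (C)--(D) for the $D_i$ barriers, the flux contradiction with \eqref{eq:JScond}, uniqueness via the flux on $\{u_1>u_2\}$) matches the paper.
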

	\begin{remark}
		Notice that conditions \eqref{eq:JScond0} and \eqref{eq:JScond} make sense, that is, they are preserved for smaller horocycles (for a proof of this property of the hyperbolic space see \cite[Section 7]{FolMe12}).
	\end{remark}
	
	\subsection*{General properties of graphs of constant mean curvature}
	Before, introducing the problem we recall a couple of results that will be necessary to prove the theorem.
	Let $D\subset\Omega$ be an open domain and $u\in\mathcal{C}^2(D)\cap\mathcal{C}^0(\partial D)$ defining an $H$-graph  $\Sigma_u$ in the classical cylinder model $\mathcal{C}$, that is, $u$ satisfy $H(u)=H\in\R,$ then, integrating $\Div\left(\frac{Gu}{\sqrt{1+\| Gu\|^2}}\right)$ gives
	\begin{equation}\label{eq:flux}
		2H\mathcal{A}(D)=\int_{\partial D}\Prod{\frac{Gu}{\sqrt{1+\|Gu\|^2}},\nu},
	\end{equation}
	where $\mathcal{A}(D)$ is the hyperbolic area of $D$ and $\nu$ is the outer normal to $\partial D.$ The integral in the right side of the equation is called \emph{flux} of $u$ across $\partial D.$ If $\gamma\subset\partial D$ is diffeomorphic to a segment we can define the flux of $u$ across $\gamma$ as follows:
	\begin{definition}\label{def:Flux}
		Let $\Gamma\subset D$ any smooth curve such that $\gamma\cup\Gamma$ bound a simply connected domain $\Delta_\Gamma.$  Then we define the flux of $u$ across $\gamma$ as
		\begin{equation*}			
			\Flux{u}{\gamma}=2H\mathcal{A}(\Delta_\Gamma)-\int_{\Gamma}\Prod{\frac{Gu}{\sqrt{1+\|Gu\|^2}},\nu}
		\end{equation*}
	\end{definition}
	Notice that this definition does not depend on the choice of $\Gamma.$ 
	
	We now state two Lemmas that we need to construct the barriers and to prove  \Cref{thm:JS}. The proof is analogous to the case $\tau=0,$ proved in \cite[Section~5]{HauRoSpr09}.
	
	\begin{lemma}\label{lemma:flux0}
		Let $u$ be such that $H(u)=H$ in a bounded domain $D$, let $\Gamma$ be a piecewise $\mathcal{C}^1$ curve in $\partial D$ and denote by $|\Gamma|$ the length of $\Gamma$. Then:
		\[2H \mathcal{A}(D)=\int_{\partial D}\Prod{\frac{Gu}{\sqrt{1+\|Gu\|^2}},\nu}\,\textrm{and }\left|\int_{\Gamma}\Prod{\frac{Gu}{\sqrt{1+\|Gu\|^2}},\nu}\right|\leq|\Gamma|.\]
		Furthermore, if $u$ is continuous on $\Gamma$, then
		\[\left|\int_{\Gamma}\Prod{\frac{Gu}{\sqrt{1+\|Gu\|^2}},\nu}\right|<|\Gamma|.\]
		In $u$ is bounded in $D$ and diverges to $\pm\infty$ on $\Gamma$, then
		\[\int_{\Gamma}\Prod{\frac{Gu}{\sqrt{1+\|Gu\|^2}},\nu}=\pm|\Gamma|.\]
	\end{lemma}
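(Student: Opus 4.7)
My plan is to establish the four assertions in the natural order. The first identity is the divergence theorem applied to \eqref{RotHMeanCurvature}: integrating $2H=\Div(Gu/\sqrt{1+\|Gu\|^2})$ over $D$ with respect to the hyperbolic area element and using that $\nu$ is the outer unit normal of $\partial D$ gives $2H\mathcal{A}(D)=\int_{\partial D}\Prod{Gu/\sqrt{1+\|Gu\|^2},\nu}$. For the weak bound on $\Gamma$, Cauchy--Schwarz in the hyperbolic metric together with $\|\nu\|=1$ yields the pointwise estimate
\[
\left|\Prod{\tfrac{Gu}{\sqrt{1+\|Gu\|^2}},\nu}\right|\leq\tfrac{\|Gu\|}{\sqrt{1+\|Gu\|^2}}<1,
\]
whose integration along $\Gamma$ gives $\left|\int_\Gamma\Prod{\cdot,\nu}\right|\leq|\Gamma|$.

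For the strict inequality when $u$ is continuous on $\Gamma$, I would invoke boundary Schauder regularity for the quasilinear CMC equation on each smooth piece of $\Gamma$ (on which $u$ is bounded by continuity), obtaining $u\in\mathcal{C}^1$ up to $\Gamma$, so $\|Gu\|$ is bounded on the compact set $\overline{\Gamma}$ by some $M>0$. Then $\|Gu\|/\sqrt{1+\|Gu\|^2}\leq M/\sqrt{1+M^2}=:C<1$ uniformly on $\Gamma$, and the integral is at most $C|\Gamma|<|\Gamma|$.

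The main obstacle is the divergence case; without loss of generality assume $u\to+\infty$ on $\Gamma$ (the opposite case follows by replacing $u$ with $-u$, which reverses the sign of $H$). I would interpret $\int_\Gamma\Prod{\cdot,\nu}$ via \Cref{def:Flux} using a family of interior curves $\Gamma_\varepsilon\subset D$ parallel to $\Gamma$ at distance $\varepsilon$, bounding with $\Gamma$ a strip $S_\varepsilon$ whose area tends to zero as $\varepsilon\to 0$. The heart of the argument is to show $\Prod{Gu/\sqrt{1+\|Gu\|^2},\nu}\to 1$ uniformly on $\Gamma_\varepsilon$. Since $u$ is bounded on compacta of $D$ and tends to $+\infty$ on $\Gamma$, one proves $\|Gu\|\to\infty$ uniformly on $\Gamma_\varepsilon$; noting that $Gu$ differs from $\lambda_{\mathcal{C}}^{-2}\nabla u$ only by the bounded vector field $-\lambda_{\mathcal{C}}^{-1}(a_{\mathcal{C}}^H\partial_x+b_{\mathcal{C}}^H\partial_y)$, the unit vector $Gu/\|Gu\|$ has the same limit as $\nabla u/\|\nabla u\|$, which by the monotonicity of $u$ approaching $+\infty$ at $\Gamma$ aligns with the outer unit normal $\nu$. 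Combined with the weak upper bound $\leq|\Gamma_\varepsilon|$ and $|\Gamma_\varepsilon|\to|\Gamma|$, this forces $\int_{\Gamma_\varepsilon}\to|\Gamma|$, and the divergence identity applied on $S_\varepsilon$ (whose area vanishes) yields $\int_\Gamma\Prod{\cdot,\nu}=|\Gamma|$.

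The hardest point is the uniform alignment of $Gu/\|Gu\|$ with $\nu$ near $\Gamma$: an arbitrary function blowing up at the boundary need not have uniformly large or correctly oriented gradient. The quasilinear CMC structure and interior gradient estimates (in the spirit of \cite{HauRoSpr09}) are essential to guarantee that the level sets of $u$ become asymptotically parallel to $\Gamma$ and $\|\nabla u\|$ blows up uniformly there.
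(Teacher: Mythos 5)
The paper does not actually prove this lemma: it only remarks that the proof is analogous to the $\tau=0$ case in \cite[Section~5]{HauRoSpr09}, so your attempt has to be measured against that standard argument. Your treatment of the first identity and of the weak bound is fine in spirit, with the caveat that $Gu$ need not extend to $\partial D$, so both $\int_{\partial D}$ and $\int_{\Gamma}$ must be read as fluxes in the sense of \Cref{def:Flux}, i.e.\ as limits along interior approximating curves; one integrates the pointwise bound $\|Gu\|/\sqrt{1+\|Gu\|^2}<1$ on those curves and then passes to the limit, rather than integrating directly on $\Gamma$.

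The genuine gap is in the strict inequality. You claim that continuity of $u$ on $\Gamma$ together with boundary Schauder theory gives $u\in\mathcal{C}^1$ up to $\Gamma$, hence a uniform bound $\|Gu\|\leq M$ and a constant $C=M/\sqrt{1+M^2}<1$. For the mean curvature equation this is false: boundary gradient estimates require geometric (Serrin-type curvature) conditions on $\partial D$, not merely continuity of the boundary values, because the equation is not uniformly elliptic. The paper itself exhibits the failure of your claim: in case (iii) of \Cref{sec:asymptotic-behaviour} the solutions $u_{d,\pm}^H$ are ``finite and vertical'' approaching $\{s=s_0^{\pm}\}$, i.e.\ continuous up to a boundary arc while $\|Gu\|\to\infty$ there. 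So the uniform constant $C<1$ you rely on need not exist, and the strict inequality has to be obtained by the contradiction/comparison argument of \cite[Section~5]{HauRoSpr09} (equality of the flux with $|\Gamma|$ forces the approximating unit normals to align with $\nu$, so the graphs converge to the vertical cylinder over $\Gamma$, which is then shown to be incompatible with $u$ staying bounded and continuous on $\Gamma$). In the divergence case you correctly identify that the uniform blow-up and the alignment of $Gu/\|Gu\|$ with $\nu$ is the crux, but you do not prove it: ``monotonicity of $u$ approaching $+\infty$'' is not an argument, and you concede in your final paragraph that the needed gradient estimates are still to be supplied. Finally, the reduction of the $-\infty$ case to the $+\infty$ case by replacing $u$ with $-u$ is not innocent here: because of the connection term in the generalized gradient one has $G(-u)\neq -Gu$ when $\tau\neq 0$, and $(x,y,t)\mapsto(x,y,-t)$ is not an isometry of $\E(-1,\tau)$, so the two signs should be handled by running the same argument directly rather than by this symmetry.
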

	
	\begin{lemma}\label{lemma:flux3}
		Let $D$ be a domain bounded in part by an arc $\gamma$ and let $\left\{u_n\right\}$ be a sequence of solutions of $H(u_n)=H$ in $D,$ such that each $\left\{u_n\right\}$ is continuous on $\gamma.$ Then
		\begin{enumerate}
			\item if the sequence tends to $+\infty$ uniformly on compact subsets of $\gamma$ while remaining uniformly bounded on compact subsets of $D,$ we have
			\[\underset{n\to\infty}{\lim} \int_{\gamma}\Prod{\frac{Gu_n}{\sqrt{1+\|Gu_n\|^2}},\nu}=|\gamma|;\]
			\item if the sequence tends to $-\infty$ uniformly on compact subsets of $\gamma$ while remaining uniformly bounded on compact subsets of $D,$ we have
			\[\underset{n\to\infty}{\lim} \int_{\gamma}\Prod{\frac{Gu_n}{\sqrt{1+\|Gu_n\|^2}},\nu}=-|\gamma|.\]
		\end{enumerate}
	\end{lemma}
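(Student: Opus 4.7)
The plan is to adapt the proof of the analogous statement for $\tau=0$ from \cite[Section~5]{HauRoSpr09}, checking that the additional terms arising from the bundle curvature in the definition of the generalized gradient $G$ and in the metric \eqref{RotMetric} do not affect the structure of the argument. The two ingredients on which that proof relies, namely the pointwise bound $\|X_n\|<1$ for $X_n:=\frac{Gu_n}{\sqrt{1+\|Gu_n\|^2}}$ and the divergence form \eqref{RotHMeanCurvature} of the mean curvature equation, remain available in our setting.

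First I would reduce case (2) to case (1): replacing $u_n$ by $-u_n$, together with reversal of the horizontal distribution of the Killing submersion, yields a sequence satisfying a CMC equation with mean curvature $-H$ and diverging to $+\infty$ on $\gamma$, while the flux integral simply changes sign. It therefore suffices to treat case (1).

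For case (1), the upper bound $\int_\gamma \Prod{X_n,\nu}\leq|\gamma|$ follows directly from \Cref{lemma:flux0} since $\|X_n\|<1$. The matching lower bound $\liminf_n \int_\gamma \Prod{X_n,\nu}\geq|\gamma|$ is the content of the proof, and by an exhaustion argument it is enough to prove $\liminf_n \int_{\gamma'} \Prod{X_n,\nu}\geq|\gamma'|$ for every compact subarc $\gamma'\subset\gamma$. Fix such a $\gamma'$ and $\varepsilon>0$. Thicken $\gamma'$ into a thin collar $\Delta\subset D$ bounded by $\gamma'$, an arc $\eta\subset D$ parallel to and close to $\gamma'$, and two short connector arcs $\sigma_1,\sigma_2$; by rectifiability one may arrange $|\eta|\leq|\gamma'|+\varepsilon$, $|\sigma_1|+|\sigma_2|<\varepsilon$, and $\mathcal{A}(\Delta)<\varepsilon$. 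After vertical translation $u_n\mapsto u_n-\inf_\eta u_n$, which leaves $X_n$ invariant, we may assume $u_n$ is uniformly bounded on compact subsets of $\Delta\cup\eta$ while $u_n\to+\infty$ uniformly on $\gamma'$.

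The decisive step is to extract from this normalized sequence a subsequence converging on compact subsets of $\Delta$ to an $H$-solution $u_\infty$ that is bounded on $\eta$ and takes the boundary value $+\infty$ on $\gamma'$; this uses the compactness theorem from \cite{DacLi09} already cited in \Cref{sec:newModel}, together with standard gradient estimates. Applying the third identity of \Cref{lemma:flux0} to $u_\infty$ on $\Delta$ then gives $\int_{\gamma'}\Prod{X_\infty,\nu}=|\gamma'|$. Combining this with the divergence identity
\[ 2H\mathcal{A}(\Delta)=\int_{\gamma'}\Prod{X_n,\nu}+\int_\eta \Prod{X_n,\nu_\Delta}+\int_{\sigma_1\cup\sigma_2}\Prod{X_n,\nu}, \]
the uniform convergence $X_n\to X_\infty$ on compact subsets of $\Delta\cup\eta$, and the choice of $\varepsilon$, yields $\liminf_n \int_{\gamma'}\Prod{X_n,\nu}\geq|\gamma'|-O(\varepsilon)$, as required.

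The main technical obstacle is to transfer the convergence of $X_n$ on the interior of $\Delta$ into a statement about the flux on the boundary arc $\gamma'$ itself, which is precisely where the limiting behaviour degenerates. This is handled by a dominated convergence argument: along the extracted subsequence $\Prod{X_n,\nu}\to 1$ in measure on $\gamma'$, since $u_n$ becomes arbitrarily vertical in a one-sided neighbourhood of $\gamma'$, which forces $\|Gu_n\|\to\infty$ and the horizontal component of $X_n$ to align with the outward conormal $\nu$; combined with the uniform bound $|\Prod{X_n,\nu}|\leq 1$ this passes the pointwise identity for $X_\infty$ to the sequence.
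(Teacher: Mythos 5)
The paper itself only remarks that the proof is analogous to the $\tau=0$ case in \cite[Section~5]{HauRoSpr09}, so your attempt must be judged against that classical argument — and it deviates from it at exactly the point where the lemma is delicate. The gap is your central claim that the interior limit $u_\infty$ of the (normalized) subsequence ``takes the boundary value $+\infty$ on $\gamma'$'', followed by applying the third identity of \Cref{lemma:flux0} to $u_\infty$ and passing the flux identity back to the sequence. Uniform convergence of $u_n$ to $+\infty$ on $\gamma'$ together with local uniform boundedness (and hence $C^1$-convergence) on compact subsets of the \emph{open} collar $\Delta$ does not prevent a boundary layer: $u_n$ may drop from $T_n\to\infty$ to $O(1)$ inside a shrinking one-sided neighbourhood of $\gamma'$, in which case $u_\infty$ extends continuously to $\gamma'$ with finite values. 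Indeed, by \Cref{thm:boundaryform} a single solution can diverge to $+\infty$ along an arc only if that arc has geodesic curvature $2H$, whereas the lemma is stated for an arbitrary arc $\gamma$; so for general $\gamma$ your claim about $u_\infty$ is not merely unproved but false, and $\int_{\gamma'}\Prod{X_\infty,\nu}<|\gamma'|$ strictly by the second part of \Cref{lemma:flux0}. The whole content of the lemma is precisely that $\lim_n\int_{\gamma'}\Prod{X_n,\nu}$ need \emph{not} equal the flux of the interior limit, so computing the latter cannot prove the former. Your closing ``dominated convergence'' paragraph does not repair this: the assertion that $\|Gu_n\|\to\infty$ with $X_n$ aligning with $\nu$ ``in a one-sided neighbourhood of $\gamma'$'' is incompatible with the convergence $X_n\to X_\infty$ on compact subsets of $\Delta$ that you also use (the blow-up can only occur in a shrinking neighbourhood), and the statement $\Prod{X_n,\nu}\to 1$ in measure on $\gamma'$ is essentially a restatement of the conclusion.

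The argument of \cite{HauRoSpr09} (going back to Jenkins--Serrin) never takes an interior limit. One applies the divergence theorem on the thin collar $\Delta$ to $\phi_n X_n$, where $\phi_n=\min\bigl(1,\max(0,u_n-M)\bigr)$ and $M$ bounds $u_n$ on $\eta\cup\sigma_1\cup\sigma_2$, so that $\phi_n\equiv1$ on $\gamma'$ and $\phi_n\equiv0$ on $\eta$ for $n$ large. This yields $\int_{\gamma'}\Prod{X_n,\nu}=2H\int_\Delta\phi_n+\int_\Delta\Prod{\nabla\phi_n,X_n}+O(|\sigma_1|+|\sigma_2|)$, and the middle term is bounded below, up to $O(\mathcal{A}(\Delta))$ and a term controlled by the fixed vector field $Gu_n-\nabla u_n$ (this is where the $\tau\neq0$ correction must actually be checked), by the area of the piece of the graph of $u_n$ lying in the slab $\{M<t<M+1\}$ over $\Delta$; since that piece projects onto $\gamma'\times(M,M+1)$ by an almost $1$-Lipschitz map when the collar is thin, its area is at least $|\gamma'|-O(\varepsilon)$. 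This gives the lower bound directly for the sequence, with no appeal to the behaviour of any limit function on $\gamma'$. Your reduction of case (2) to case (1) and your upper bound via $\|X_n\|<1$ are fine, but the lower bound — the entire substance of the lemma — is not established by your argument.
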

	
	In the following theorem we prove a General Maximum Principle for unbounded domains that will be useful to study the case $\left\{D_i\right\}=\emptyset$. For $\tau=0$, this theorem was proved by Collin and Rosenberge \cite[Theorem 2]{CoRo}, when $H=0$, and by Folha and Melo \cite[Theorem 4.2]{FolMe12}, when $H\in\left(0,\tfrac{1}{2}\right)$. The following results can be deduced by the argument in the proof of \cite[Theorem 4.1]{Del}, but we give a proof for the sake of completeness.
	\begin{theorem}
		Let $\Omega_0$ be a domain such that $\partial \Omega_0$ is an ideal $H$-polygon. Let $W\subset\Omega_0$ be a domain and let $u_1,u_2\in\mathcal{C}^0(\overline{W})$ be two solution of $H(u^i)=H$ in $W$ with $u_1<u_2$ on $\partial W$. Suppose that for each vertex $p$ of $\partial\Omega_0$, $\liminf \dist_{\H^2}(\Gamma_1,\Gamma_2)\to0$ as one converges to $p$, where $\Gamma_1$ and $\Gamma_2$ are the curves on $\partial\Omega_0$ with $p$ as vertex. Then $u_1\leq u_2$ in $W$. 
	\end{theorem}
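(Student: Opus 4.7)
The plan is to argue by contradiction via a divergence-theorem/flux computation, truncating the potentially unbounded superlevel set $W_t=\{u_1>u_2+t\}$ with small horocycles at the ideal vertices of $\partial\Omega_0$, and playing off the strict negativity of the flux integrand along $\partial W_t\cap W$ against the shortness of the horocyclic pieces guaranteed by the hypothesis.

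First, assume $\sup_W(u_1-u_2)>0$ and, invoking Sard's theorem applied to $u_1-u_2$, pick a regular value $t\in(0,\sup_W(u_1-u_2))$ so that $W_t$ is a non-empty open set with $\partial W_t\cap W$ a $\mathcal C^1$ curve. Because $u_1<u_2$ on $\partial W$, the closure $\overline{W_t}$ is disjoint from $\partial W$ and hence can fail to be compact only at the ideal vertices of $\partial\Omega_0$. Consider the vector field $Y:=X_{u_1}-X_{u_2}$, where $X_{u_i}=Gu_i/\sqrt{1+\|Gu_i\|^2}$; since $H(u_1)=H(u_2)=H$, we have $\Div(Y)=0$.

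Next, using the hypothesis that $\liminf\dist_{\H^2}(\Gamma_1,\Gamma_2)\to0$ at each vertex $p_i$, for every $\epsilon>0$ choose horocycles $\sigma_i$ at $p_i$ whose arcs $\sigma_i\cap\Omega_0$ have length less than $\epsilon$; let $\Omega_0^\sigma=\Omega_0\sm\bigcup F_i$ and $W_t^\sigma=W_t\cap\Omega_0^\sigma$, which is relatively compact in $W$. Applying Stokes/divergence,
\[
0=\int_{\partial W_t^\sigma}\Prod{Y,\nu}\,ds=\int_{\partial W_t\cap\Omega_0^\sigma}\Prod{Y,\nu}\,ds+\sum_i\int_{W_t\cap\sigma_i}\Prod{Y,\nu}\,ds.
\]
On the first piece $u_1-u_2=t$ is constant, so $Gu_1-Gu_2\parallel\nabla(u_1-u_2)$ points into $W_t$; the map $V\mapsto V/\sqrt{1+\|V\|^2}$ is strictly monotone, and Hopf's lemma applied to $\phi=u_1-u_2-t$ (which satisfies a linear elliptic equation obtained from the quasilinear mean curvature operator, is non-negative in $W_t$, and vanishes on $\partial W_t$) guarantees $Gu_1\neq Gu_2$ at every smooth boundary point. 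Hence $\Prod{Y,\nu}<0$ pointwise along $\partial W_t\cap W$.

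Finally, fix a compact arc $K\subset\partial W_t\cap W$ of positive length; by continuity $\int_K\Prod{Y,\nu}\,ds\le -c<0$ for some $c>0$, and for $\sigma$ small enough that $K\subset\Omega_0^\sigma$ we get $\int_{\partial W_t\cap\Omega_0^\sigma}\Prod{Y,\nu}\le -c$. Meanwhile \Cref{lemma:flux0} bounds $\bigl|\int_{W_t\cap\sigma_i}\Prod{Y,\nu}\bigr|\le 2|\sigma_i\cap W_t|\le 2\epsilon$, giving a total horocyclic contribution at most $2n\epsilon$. Choosing $\epsilon<c/(2n)$ yields $0\le -c+2n\epsilon<0$, a contradiction. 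The most delicate step is step three: turning the abstract condition $\liminf\dist_{\H^2}(\Gamma_1,\Gamma_2)\to0$ into an honest upper bound on the length of the chord $\sigma_i\cap\Omega_0$ for a suitable shrinking family of horocycles, and verifying that the Hopf-lemma requirement is applicable along a regular level curve of $u_1-u_2$; the rest is a standard flux cancellation.
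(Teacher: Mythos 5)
Your proposal is correct and follows essentially the same route as the paper: a flux/divergence computation for the divergence-free field $X_{u_1}-X_{u_2}$ over the region where $u_1$ exceeds $u_2$, truncated near the ideal vertices (you use short horocyclic arcs, the paper uses large geodesic circles whose intersection with the region becomes short by the same hypothesis), with the contradiction coming from the strict monotonicity of $V\mapsto V/\sqrt{1+\|V\|^2}$ along the level curve. Your use of Sard's theorem to work on a regular superlevel set $\{u_1>u_2+t\}$ is a small technical refinement over the paper's direct use of the zero set, and the delicate point you flag --- converting $\liminf\dist_{\H^2}(\Gamma_1,\Gamma_2)\to 0$ into an actual length bound on the truncating arcs --- is exactly the step the paper also passes over with the phrase ``by hypothesis.''
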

	\begin{proof}
		Suppose that there exists $W_0\subset W=\left\{p\in W|u_1(p)>u_2(p)\right\}$. The classical Maximum Principle implies that $W_0$ cannot be compact. Furthermore, $u_1=u_2$ on $\partial W_0$ by continuity. Denote by $W_0^n$ the intersection between $W_0$ and the geodesic ball $B(p,n)$ of radius $n$ centered at a fixed point $p$ outside $W_0$. Since $H(u_1)=H(u_2)=H$, \Cref{lemma:flux0} implies that 
		\[\int_{\partial W_0^n}\Prod{\frac{Gu_1}{\sqrt{1-\|Gu_1\|^2}}-\frac{Gu_2}{\sqrt{1-\|Gu_2\|^2}},\nu}=0\]
		for any $n\in\N$, where $\nu$ is the unit normal of $\partial W_0$ pointing inside $W_0$.
		Furthermore, \Cref{lemma:flux0} implies that 
		\[\left|\int_{\partial B(p,n)\cap W_0}\Prod{\frac{Gu_1}{\sqrt{1-\|Gu_1\|^2}}-\frac{Gu_2}{\sqrt{1-\|Gu_2\|^2}},\nu}\right|\leq 2\Length(\partial B(p,n)\cap W_0),\]
		that goes to $0$ for $n\to\infty$ by hypothesis. So, the result follows by proving that $$\int_{B(p,n)\cap\partial W_0}\Prod{\frac{Gu_1}{\sqrt{1-\|Gu_1\|^2}}-\frac{Gu_2}{\sqrt{1-\|Gu_2\|^2}},\nu}$$ stays bounded away from zero. In particular, it is sufficient to prove that $\Prod{\frac{Gu_1}{\sqrt{1-\|Gu_1\|^2}}-\frac{Gu_2}{\sqrt{1-\|Gu_2\|^2}},\nu}$ does not change sign.
		
		Since, $u_1-u_2>0$ in $W_0$ and $u_1=u_2$ on $\partial W_0$, one has $Gu_1-Gu_2=\nabla (u_1-u_2)=\lambda \eta$, where $\eta$ (the inner pointing conormal to $\partial W_0$ in $W_0$) orients $\partial W_0$ and $\lambda$ is a strictly positive function (by the boundary maximum priciple). So, $\Prod{\frac{Gu_1}{\sqrt{1-\|Gu_1\|^2}}-\frac{Gu_2}{\sqrt{1-\|Gu_2\|^2}},\nu}$ does not change sign if and only if $\Prod{\frac{Gu_1}{\sqrt{1-\|Gu_1\|^2}}-\frac{Gu_2}{\sqrt{1-\|Gu_2\|^2}},Gu_1-Gu_2}$ does not change sign, but in \cite[Lemma 2.1]{Del} we can see that \[\Prod{\frac{Gu_1}{\sqrt{1-\|Gu_1\|^2}}-\frac{Gu_2}{\sqrt{1-\|Gu_2\|^2}},Gu_1-Gu_2}\geq0\] and that the equality holds only at points $q\in\partial W_0$ where $\nabla (u_1(q)-u_2(q))=0$ and this concludes the proof.
	\end{proof}
	{
	\begin{remark}
		Notice that if $u\in\mathcal{C}^\infty(D)$ defines the graph $\Sigma_u$ in $\mathcal{C}$, the quantity $\frac{Gu}{\sqrt{1+\|Gu\|^2}}$ is the horizontal part of the upwise unite normal $N_u$ of $\Sigma_u$ and does not depend on the model we are considering. In particular, the function $v=u-\mathcal{R}^H_{-2H}$ will define the same surface in the model $\mathcal{C}_H$ and \[\frac{Gu}{\sqrt{1+\|Gu\|^2}}=\frac{G^Hv}{\sqrt{1+\|G^Hv\|^2}}.\] 
		In particular, the previous results are also true for CMC graphs in the $H$-cylinder model  by defining 
		\[\Flux{u}{\gamma}=2H(u)\mathcal{A}(\Delta_\Gamma)-\int_{\Gamma}\Prod{\frac{G^Hu}{\sqrt{1+\|G^Hu\|^2}},\nu},\]
		where $H(u)$ is defined in \Cref{RotHMeanCurvature}.
	\end{remark}
	}
	
	Finally, the following theorem, proved in \cite[Theorem~3.3]{RoSoTo10}, justify the $A_i,B_i$:
	
	\begin{theorem}\label{thm:boundaryform}
		Let $D\subset\mathbb{H}^2$ be a domain bounded in part by a $\mathcal{C}^2$ arc $\gamma$ and let $u\in\mathcal{C}^2(D)$ be a solution of $H(u)=0.$ If $u$ tend to $+\infty$ for any approach to interior points of $\gamma,$ then $\gamma$ has geodesic curvature $\kappa_g(\gamma)=2H,$ while if $u$ tends to $-\infty$ for any approach to interior points of $\gamma,$ then $\gamma$ has geodesic curvature $\kappa_g(\gamma)=-2H.$
	\end{theorem}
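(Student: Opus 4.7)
My plan is to treat the $u\to+\infty$ case (the $u\to-\infty$ case being symmetric, with signs reversed accordingly) and to reduce everything to a local statement at each interior point $p\in\gamma$: the aim is to show $\kappa_g(\gamma)(p)=2H$. The strategy would combine the flux identity \eqref{eq:flux} applied to a thin tube adjacent to $\gamma$ with a barrier comparison using the hyperbolic-invariant $H$-graphs from \Cref{sec:asymptotic-behaviour}. Throughout, write $X_u:=Gu/\sqrt{1+\|Gu\|^2}$ for the horizontal part of the upward unit normal to the graph.

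For the one-sided bound $\kappa_g(\gamma)(p)\leq 2H$, I would fix a short subarc $\gamma_0\subset\gamma$ containing $p$ and, for $0<\delta<\epsilon$, let $\gamma_0^\epsilon$ denote the parallel curve in $D$ at hyperbolic distance $\epsilon$ from $\gamma_0$ along the inward conormal; let $\Omega_{\delta,\epsilon}\subset D$ be the region between $\gamma_0^\delta$ and $\gamma_0^\epsilon$, closed off by two short geodesic transversals of length comparable to $\epsilon$. Since $u\to+\infty$ on $\gamma$, the vector $X_u$ tends to the outer conormal $\nu_\gamma$ on approach to $\gamma$, which is asymptotically orthogonal to the side transversals, so their flux contribution is $o(\epsilon)$. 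Combining this with \Cref{lemma:flux3} in the limit $\delta\to 0$ (where the flux across $\gamma_0^\delta$ tends to $|\gamma_0|$), the identity \eqref{eq:flux} takes the form
\[
2H\,\mathcal{A}(\Omega_{0,\epsilon}) \;=\; |\gamma_0| \;+\; \int_{\gamma_0^\epsilon}\Prod{X_u,\nu_\epsilon} \;+\; o(\epsilon),
\]
with $\nu_\epsilon$ the outer conormal on $\gamma_0^\epsilon$. Plugging in the hyperbolic expansions $\mathcal{A}(\Omega_{0,\epsilon})=\epsilon|\gamma_0|+O(\epsilon^2)$ and $|\gamma_0^\epsilon|=|\gamma_0|-\epsilon\int_{\gamma_0}\kappa_g\,ds+O(\epsilon^2)$, and using the strict bound $|\int_{\gamma_0^\epsilon}\Prod{X_u,\nu_\epsilon}|<|\gamma_0^\epsilon|$ from \Cref{lemma:flux0}, dividing by $\epsilon$ and letting $\epsilon\to 0$ would yield $\int_{\gamma_0}\kappa_g\,ds\leq 2H|\gamma_0|$. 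Shrinking $\gamma_0$ to $p$ then gives $\kappa_g(\gamma)(p)\leq 2H$.

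The reverse inequality is the step I expect to be the main obstacle, since the flux estimate above is intrinsically one-sided. I would argue by contradiction: suppose $\kappa_g(\gamma)(p)<2H$. By the classification in \Cref{sec:asymptotic-behaviour}, the hyperbolic-invariant $H$-graphs $u_{d,+}^H$ with $d=\sqrt{1-4H^2}$ diverge to $+\infty$ along a curve $\bar\gamma$ of constant geodesic curvature $2H$. I would position such a barrier so that $\bar\gamma$ is tangent to $\gamma$ at $p$; the strict inequality $\kappa_g(\gamma)(p)<2H$ then forces $\bar\gamma$ to enter $D$ locally on both sides of $p$, so $u$ stays finite on $\bar\gamma\cap D$. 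A downward vertical translation of the barrier followed by a first-contact analysis via the boundary maximum principle, in the spirit of the proof of \Cref{prop:H122}, would produce a contradiction: the translated barrier would attain $+\infty$ on $\bar\gamma\cap D$, where $u$ remains finite. This would force $\kappa_g(\gamma)(p)\geq 2H$, completing the proof.
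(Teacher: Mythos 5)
The paper does not prove this statement at all: it is quoted from \cite[Theorem~3.3]{RoSoTo10}, where the argument is via curvature estimates for stable $H$-graphs --- the downward vertical translates of the graph of $u$ converge to the vertical cylinder over $\gamma$, which is therefore itself an $H$-surface, and since a vertical cylinder over a curve $\gamma$ has mean curvature $\kappa_g(\gamma)/2$, the conclusion follows. Measured against that, your proposal has gaps in both halves. In the first half, the side transversals of your collar $\Omega_{\delta,\epsilon}$ have length $\epsilon$, so \Cref{lemma:flux0} only bounds their flux by $O(\epsilon)$, which is exactly the order of every surviving term after the leading cancellation; your claim that it is $o(\epsilon)$ requires knowing that $X_u\to\nu_\gamma$ \emph{uniformly on a full neighbourhood of $\gamma$ in $D$}, not merely in the integrated sense on $\gamma$ supplied by \Cref{lemma:flux0,lemma:flux3}. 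That uniform convergence is essentially the cylinder-convergence statement of the cited theorem, so the step is close to circular. The classical way to avoid this is to apply the flux identity to the region bounded by a subarc $\gamma'\subset\gamma$ and the constant-curvature-$2H$ arc $c$ joining its endpoints (no transversal sides appear), and then invoke the rigidity of the inequality $|\gamma'|-2H\mathcal{A}(R)\geq|c|$ to force $\gamma'=c$; this yields both inequalities at once.

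The second half is the more serious problem: the asserted contradiction is not a contradiction. If $\kappa_g(\gamma)(p)<2H$ and $\bar\gamma$ is tangent to $\gamma$ at $p$, then $p\in\bar\gamma$, so the barrier $\omega^+$ blows up to $+\infty$ at $p$ exactly as $u$ does, and the statement ``the translated barrier attains $+\infty$ on $\bar\gamma\cap D$ where $u$ remains finite'' is perfectly compatible with $u\leq\omega^++C$: an upper barrier is allowed to be $+\infty$ where the dominated function is finite. To extract a genuine contradiction from the maximum principle you would need either a finite upper barrier that $u$ exceeds, or a common domain on whose entire boundary one function dominates the other; here the divergence set of $u$ (the arc $\gamma$) and that of the barrier (the arc $\bar\gamma$) are tangent and cross after any small translation, so on every candidate domain near $p$ each function is $+\infty$ on part of the boundary where the other is finite, and no one-sided comparison closes. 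This is unlike \Cref{prop:H122}, where two distinct $H$-surfaces were tangent along an interior curve with aligned mean curvature vectors. As written, the reverse inequality $\kappa_g(p)\geq 2H$ is not established; you would need either the chord argument above or the compactness/curvature-estimate argument of \cite{RoSoTo10}.
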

	 {\subsection*{Divergence lines}
	The proof of \Cref{thm:JS} relies the study the divergence set of certain sequence of solution and on the application of the Divergence lines technique.
	The Divergence Lines Technique was introduced by Mazet, Rodriguez and Rosenberg to study the minimal case in $\H^2\times\R$ \cite{MazRodRos11} and extended to the CMC case by Folha and Melo \cite{FolMe12}, we refer to \cite[Section 6]{FolMe12} for more details.
	\begin{definition}
		Lef $\mathcal{D}\subset\Omega$ be a domain with piecewise smooth boundary, and $u_n$ a sequence of $H$-graphs in $\mathcal{D}$. We define the \emph{convergence set} as \[\mathcal{U}=\left\{p\in\mathcal{D}|\left\{\|\nabla u_n(p)\|\right\}\textrm{ is bounded independent of }n\right\}\]
		and the diverngence set as \[\mathcal{V}=\mathcal{D}-\mathcal{U}.\]
	\end{definition}
	A classical argument (see for instance \cite[Lemmas 6.1 and 6.2]{FolMe12}) guarantees that for any $p\in\mathcal{U}$ the sequence $\left\{u_n-u_n(p)\right\}$ admits a subsequence that converges uniformly in a neighborhood of $p$, while if $p\in\mathcal{V}$, up to pass to a subsequence, $\left\{N_{u_n}(p)\right\}$ converges to an horizontal vector field and the sequence of the graph $\left\{\Sigma_n\right\}$ of $\left\{u_n-u_n(p)\right\}$ converges to the vertical cylinder above $L$ where $L\subset\mathcal{D}$ is a curve passing through $p$ with constant curvature $2H$ such that its unit normal in $p$ coincide with the limit of $\left\{N_{u_n}(p)\right\}$.
	So we can give the following definition.
	\begin{definition}
		Given a sequence $\left\{u_n\right\}$ of function satisfying the CMC equation equal to $H$ in $\mathcal{D}$, we say that a curve $L\subset\mathcal{D}$ of geodesic curvature $2H$ is a \emph{divergence line} for $\left\{u_n\right\}$ if for any $p\in L$, up to take a subsequence, the sequence of the $H$-graphs $\left\{\Sigma_n\right\}$ of $\left\{u_n-u_n(p)\right\}$ converges to the vertical cylinder above $L$.
	\end{definition}
	It can be prove that $\mathcal{V}$ is the union of a countable number of pairwise disjoint divergent lines and that if $\mathcal{U}'\subset\mathcal{U}$ is a connected component then the limit of $\left\{u_n-u_n(p)\right\}$ diverges to $\pm\infty$ approaching $L\subset\overline{\mathcal{U}}'\cap\overline{\mathcal{V}}$ depending on the sign of the geodesic curvature of $L$ with respect to $\mathcal{U}'$ ($+\infty$ if it $L$ is convex and $-\infty$ otherwise).
	Furthermore, as it is done in \cite[Section 6]{FolMe12}, it can be shown that a divergence line can not end in the interior any $A_i,$ $B_i$ or $C_i$.}
	
	\subsection*{Proof of the Jenkins--Serrin type Theorem}

	Now we have all the ingredient to prove \Cref{thm:JS}.
	\begin{proof}[Proof of \Cref{thm:JS}]		
		The proof is divided in three steps:
		\begin{enumerate}
			\item First, one proves the result assuming that the domain is bounded.
			\item Then, one allows $\mathcal{D}$ to be unbounded by assumes $\left\{D_i\right\}$ to be empty.
			\item Finally, we study the case $\left\{D_i\right\}\neq \emptyset$.
		\end{enumerate}
		
		Recall that for $\tau=0$ Step (1) has already been proven in \cite{HauRoSpr09} and Step (2) has already been proven in \cite{FolMe12}. While the proof in this general case is identical to the case $\tau = 0$, for the sake of completeness, we will revisit the main arguments, referring to the original results for further details.
		
		Step (1). The uniqueness can be proved as in \cite[Theorem 7.13]{HauRoSpr09}, using \cite[Lemma 2.1]{Del} instead of \cite[Lemma 2.1]{HauRoSpr09}. The proof of this step is divided in four cases.
		\begin{itemize}
			\item Case $\left\{B_i\right\}=\emptyset$ and $f_i$ bounded below (\cite[Theorem 7.9]{HauRoSpr09}). 
			
			Let $u_n$ be the unique solution of the Dirichlet problem
			\[\begin{cases}
				\begin{array}{lr}
					H(u_n)=H&\textrm{in }\mathcal{D};\\
					u_n=\min(n,f_i)&\textrm{in }C_i;\\
					u_n=n&\textrm{in }A_i,						
				\end{array}
			\end{cases}\]
			so that $\left\{u_n\right\}$ be an increasing sequence of $H$-graphs.
			For each $C_i$ and any two points $p_1,p_2\in C_i$, let $\Gamma_i$ be the curve of constant geodesic curvature $2H$ passing through $p_1$ and $p_2$ such that the convex unbounded domain $\Omega_{C_i}$ bounded by $\Gamma_i(p_1,p_2)$ intersects the connected component $C_i(p_1,p_2)$ between $p_1$ and $p_2$ of $C_i$. Denoting by $\omega^+\in\mathcal{C}^\infty(\Omega_{C_i})$ the function whose graph is an hyperbolic invariant $H$-surface that diverges to $+\infty$ along $\Gamma_i(p_1,p_2)$ and converges to the graph of the function $f$ on $\partial_\infty\Omega_{C_i}$ that was described in \Cref{sec:Barriers}. Un to apply a vertical translation, since both $f_i$ and $\omega^+$ are bounded in $C_i(p_1,p_2)$, we can assume $\omega^+_{|C_i(p_1,p_2)}>f_i$ and so the Maximum Principle implies that $\omega^+>u_n$ in $\mathcal{D}\cap\Omega_{C_i}$ for any $n\in\N$. Since this is true for all $p_1,p_2\in C_i$, if the divergence set $\mathcal{V}$ of $\left\{u_n\right\}$ is not empty, it is contained in the domain $\mathcal{D}_0$ bounded by $\cup A_i\cup\Gamma_i$ (where $\Gamma_i$ is the curve of constant geodesic curvature $2H$ passing trough the endpoints of $C_i$ that is convex with respect to $C_i$). Let $\mathcal{P}\subset\overline{\mathcal{D}_0}$ be an admissible $H$-polygon bounding a connected component $\mathcal{V}_0$ of $\mathcal{V}$. It is clear that the elements of $\mathcal{P}\setminus\cup A_i$ are concave with respect to $\mathcal{D}_0$. Then, \Cref{lemma:flux0} applied to each $u_n$ in $\mathcal{P}$,
			\begin{equation}\label{eq:proof1}
				2H\mathcal{A}(\mathcal{D}_\mathcal{P})=\Flux{u_n}{\mathcal{P}\setminus\cup A_i}+\Flux{u_n}{\bigcup(A_i\cap\mathcal{P})}.
			\end{equation}
			Then, \Cref{lemma:flux3} implies that
			\begin{equation*}
				\lim_{n\to\infty}\Flux{u_n}{\mathcal{P}\setminus\cup A_i}=-(\gamma(\mathcal{P})-\alpha(\mathcal{P})).
			\end{equation*}
			But, $|\Flux{u_n}{\bigcup(A_i\cap\mathcal{P})}|\leq\alpha(\mathcal{P})$, hence $\gamma(\mathcal{P})-\alpha(\mathcal{P})\leq\alpha(\mathcal{P})-2H\mathcal{A}(\mathcal{D}_\mathcal{P})$, contradicting \eqref{eq:JScond}. In particular, $\mathcal{V}=\emptyset$ and $\left\{u_n\right\}$ converges to $u$ uniformly in all compact subsets of $\mathcal{D}$ and a standard barrier argument implies that $u$ satisfies the boundary conditions in of $P_{JS}(\mathcal{D})$.  {For completeness we show that for an arbitrary $p\in C_i$, $u(p)=f_i(p)$. To do so it is sufficient to find a neighbourhood $\Lambda\subset\overline{\mathcal{D}}$ of $p$ and a barrier $\omega_p\in\mathcal{C}^\infty(\Lambda)$ such that $\omega_p(p)=f_i(p)$, $\omega_i\geq f_i$ in $C_i\cap \Lambda$ and $\omega_i>u_n$ in $\partial\Lambda\cap\mathcal{D}$ for every $n$. So, we consider $\Lambda$ to be the intersection of $\overline{\mathcal{D}}$ and a geodesic ball centered in $p$ of radius $\varepsilon>0$ sufficiently small and in $\Lambda$ we consider $H$-graph $\omega_p$ that is equal to $f_i$ in $C_i\cap\partial\Lambda$ and $\omega^+_{|C_i(p_1,p_2)}$ (assuming $p$ between $p_1,p_2\in C_i\cap\partial\Lambda$). This solution exists for \cite[Therom 1]{DacLi09} and satisfy the assumption for the Maximum Principle. We also should point out that we can use $u_0$ as lower barrier.} The necessity of condition \eqref{eq:JScond} is clear by using \eqref{eq:proof1} and \Cref{lemma:flux0}.
			
			\item Case $\left\{A_i\right\}=\emptyset$ and $f_i$ bounded above (\cite[Theorem 7.10]{HauRoSpr09}).
			Let $u_n$ be the unique solution of the Dirichlet problem
			\[\begin{cases}
				\begin{array}{lr}
					H(u_n)=H&\textrm{in }\mathcal{D}^*;\\
					u_n=\max(-n,f_i)&\textrm{in }C_i;\\
					u_n=-n&\textrm{in }B_i^*,						
				\end{array}
			\end{cases},\]
			where $\mathcal{D}^*$ is the extended domain described in \Cref{def:admissible domain} and $B_i^*$ is the hyperbolic reflection of $B_i$ with respect to the geodesic passing through its endpoints. In this way $\left\{u_n\right\}$ is a decreasing sequence of solution to the CMC equation. Notice first that $\left\{u_n\right\}$ diverges to $-\infty$ in $\mathcal{D}^*\setminus\mathcal{D}$. Indeed the previous step implies that, for each $i$, there exists a solution $\omega_i$ of the CMC equation in the domain $\mathcal{D}_{B_i}^*$ bounded by $B_i$ and $B_i^*$ such that $\omega_i=0$ in $B_i^*$ and $\omega_n^i=+\infty$ in $B_i$. In particular, $\omega_i-n>u_n$ in $\mathcal{D}_{B_i}^*$ for any $n\in\N$ and so, taking the limit for $n\to\infty$ we get that $\left\{u_n\right\}$ diverges uniformly on $\mathcal{D}^*\setminus\mathcal{D}$. 
			 {If there is a $C_i$ with geodesic curvature not globally equal to $2H$, we can use a similar argument as in the previous case (considering a lower barrier invariant by hyperbolic translation) to show that $\left\{u_n\right\}$ converges in a neighbourhood of that $C_i$ and hence in all $\mathcal{D}$.
			Otherwise, it is sufficient to show that fixed any $p\in\mathcal{D}$ the sequence $\left\{u_n(p)\right\}$ does not diverges. Assume by contradiction that it diverges, then the sequence $\left\{u_n-u_n(p)\right\}$ converges to $u$ in a neighbourhood of $p$ that can be extended to all the interior of $\mathcal{D}$, notice that since $u_n(p)$ diverges to $-\infty$, then $u$ must diverge to $+\infty$ along all the $C_i$, but applying the flux argument this contradicts the hypothesis of the theorem. So $u_n$ converges uniformly to $u$ in the interior of $\mathcal{D}$ and we can use the same argument as before to prove that it attains the right boundary values. }
			
			\item Case $\left\{C_i\right\}\neq\emptyset$ (\cite[Theorem 7.11]{HauRoSpr09}).
			
			In this case we consider the decreasing sequence $\left\{u_n\right\}$ of solution of 
			\[\begin{cases}
				\begin{array}{lr}
					H(u_n)=H&\textrm{in }\mathcal{D}^*;\\
					u_n=f_i &\textrm{in }C_i;\\
					u_n=+\infty&\textrm{in }A_i,	\\
					u_n=-n&\textrm{in }B_i^*,	
				\end{array}
			\end{cases}\] 
			and the increasing sequence $\left\{v_n\right\}$ of solution of 
			\[\begin{cases}
				\begin{array}{lr}
					H(v_n)=H&\textrm{in }\mathcal{D};\\
					v_n=f_i &\textrm{in }C_i;\\
					v_n=n&\textrm{in }A_i,	\\
					v_n=-\infty&\textrm{in }B_i,	
				\end{array}
			\end{cases}\]
			whose existence was proved in the previous two steps. Notice that for any $m,n\in\N$, $v_n<u_n$ in $\mathcal{D}$. 
			Hence, the Monotone Convergence Theorem and a barrier argumetn as in the Case $\left\{A_i\right\}=\emptyset$ imply that there exist $v=\lim_{n\to\infty}v_n>u=\lim_{n\to\infty}{u_n}_{|\mathcal{D}}$. In particular, we get that both $u$ and $v$ satisfy $P_{JS}(\mathcal{D})$ and $u=v$ since the solution is unique.
			\item Case $\left\{C_i\right\}=\emptyset$. The proof of this last case is identical to the one of \cite[Theorem 7.12]{HauRoSpr09}.
		\end{itemize}
		
		Step (2). 
		Since the proof of this step is analogous to the one of \cite[Theorems 3.1 and 3.2]{FolMe12}, we recall just the main arguments, avoiding the details.
		
		Using the Compactness Theorem and a diagonal argument as in the proof of \Cref{thm:entire}, we are able to construct the sequence $\left\{u_n\right\}$ of solution of 
		\[\begin{cases}
			\begin{array}{lr}
				H(u_n)=H&\textbf{in }\mathcal{D}^*;\\
				u_n=f_i&\textbf{in }C_i;\\
				u_n=n&\textbf{in }A_i;\\
				u_n=-n&\textbf{in }B_i^*.
			\end{array}
		\end{cases}\]
		As in the second case of the previous step it is easy to see that $\left\{u_n\right\}$ diverges to $-\infty$ in the domain bounded by $B_i\cup B_i^*$, for all $i$, and in particular $\left\{B_i\right\}\subset\left\{L_i\right\}$.
		Now, we distinguish two cases.
		\begin{itemize}
			\item Case $\left\{C_i\right\}=\emptyset$.
			\begin{itemize}
				\item Since $\partial_\infty\mathcal{D}$ has a finite number of vertices, we can suppose that the divergence set is composed of a finite number of disjoint divergence lines. These lines separate
				the domain $\mathcal{D}$ in at least two connected components, and the interior of these components belongs to the convergence domain. The aim is to prove that $\left\{L_i\right\}=\left\{B_i\right\}$.
				\item If $\mathcal{U}'\subset\mathcal{U}=\mathcal{D}\setminus\left\{L_i\right\}$, for any $p\in\mathcal{U}'$, $\left\{u_n-u_n(p)\right\}$ converges in compact subsets of $\mathcal{U}'$ and $\partial\mathcal{U}'$ is an inscribed $H$-polygon.
				\item Without loss of generality there exists only one divergence line $L_{\mathcal{U}'}\subset\partial\mathcal{U}'$ and that it is concave with respect to $\mathcal{U}'$ (if it is convex the argument is analogous after changing the sign). In particular, if we denote by $u_p=\lim_{n\to\infty}u_n-u_n(p)$, for any arc $\Gamma\subset L_{\mathcal{U}'}$ we get that $\Flux{u}{\Gamma}=-|\Gamma|$.
				\item If we denote by $H_i(m)$ the horocycle at the ideal vertex $p_i$ with euclidean radius $1/m$, we get that for $m $ sufficiently large $\partial\mathcal{U}'$ intersects each $H_i$ in two points.
				So, we can define $\mathcal{P}(m)=\partial\left(\mathcal{U}'\setminus \cup H_i(m)\right)$.
				\item Using \Cref{lemma:flux0}, one gets
				\[\begin{array}{rl}
					2H\mathcal{A}\left(\mathcal{U}'\setminus\cup H_i(m)\right)=&\Flux{u}{\mathcal{P}(m)}\\
					=&\Flux{u}{\mathcal{P}(m) \setminus\left(\bigcup_i(A_i\setminus H_j(m)\cup \bigcup_i(H_i\cap\mathcal{U}'))\right)}\\&\qquad\qquad\qquad+\Flux{u}{\left(\bigcup_i(A_i\setminus H_j(m)\cup \bigcup_i(H_i\cap\mathcal{U}'))\right)}\\
					\leq& 2\alpha(\mathcal{\partial\mathcal{U}'})-\gamma(\mathcal{\partial\mathcal{U}'})+\sum_i|H_i(m)\cap\mathcal{U}'|
				\end{array}\]
				and taking the limit for $m\to\infty$, one gets
				\[2H\mathcal{A}\left(\mathcal{U}'\right)\leq2\alpha\left(\partial\mathcal{U}'\right)-\gamma\left(\partial\mathcal{U}'\right)\] contradicting \eqref{eq:JScond}.
				\item Inductively, one gets that $\left\{L_i\right\}=\left\{B_i\right\}$. So we take $p\in\mathcal{U}'=\mathcal{D}$ and consider the sequence $u_n-u_n(p)$ that converges to the solution $u$.
				\item If $\lim\, u_n(p)$ is bounded, then $u$ easily takes the desired boundary values.
				\item If $\lim\, u_n(p)=-\infty$, then $u$ diverges to $+\infty$ on $\bigcup_iA_i$. By the flux formula, one gets
				\[\begin{array}{rl}
					\lim_{n\to\infty}\Flux{u_n}{\mathcal{P}(m)}=&2H\mathcal{A}(\mathcal{D})-2H\mathcal{A}(\mathcal{D}\cap(\cup F_i(m)))\\
					=& \sum\lim_{n\to\infty}\Flux{u_n}{A_i(m)}+\sum\lim_{n\to\infty}\Flux{u_n}{B_i(m)}\\&\qquad\qquad\qquad\qquad\qquad\qquad+\sum\lim_{n\to\infty}\Flux{u_n}{\mathcal{D}\cap H_i(m)}\\
					\geq& \alpha(\partial\mathcal{D})-\beta(\partial\mathcal{D})-\sum|\mathcal{D}\cap H_i(m)|,
				\end{array}\]
				which implies
				\[2\beta(\mathcal{\partial\mathcal{D}})\geq\gamma(\mathcal{\partial\mathcal{D}})-2H\mathcal{A}(\mathcal{D}).\]
				The strict inequality is not allowed by the hypothesis and so $2\beta(\mathcal{\partial\mathcal{D}})=\gamma(\mathcal{\partial\mathcal{D}})-2H\mathcal{A}(\mathcal{D}).$ This implies that $$\lim_{n\to\infty}\Flux{u_n-u_n(p)}{B_i(m)}=-|B_i(m)|$$ and then $u$ diverges to $-\infty$ on $\bigcup_iB_i$.
				\item  If $\lim\, u_n(p)=+\infty$, then $u$ diverges to $-\infty$ on $\bigcup_iB_i$ and we can argue as before to prove that $u$ diverges to $+\infty$ on $\bigcup_iA_i$.
				\item To prove the necessity of conditions \eqref{eq:JScond0} and \eqref{eq:JScond} we suppose that there exists a solution $u$ in $\mathcal{D}$. Applying the flux formula to $\mathcal{P}(m)=\partial(\mathcal{D}\setminus\cup F_i(m))$, one gets
				\[\begin{array}{rl}
					2H\mathcal{A}(\mathcal{D}\setminus\cup F_i(m))=&\Flux{u}{\mathcal{P}(m)}\\
					=&\Flux{u}{A_i(m)}+\Flux{u}{B_i(m)}+\Flux{u}{ H_i(m)\cap\mathcal{D}}
				\end{array}\]
				In particular,
				\[\begin{array}{c}
					\sum|A_i(m)|-\sum|B_i(m)|-\sum| H_i(m)\cap\mathcal{D}|\leq 2H\tilde{\mathcal{A}}(\mathcal{D})-2H\mathcal{A}(\tilde{\mathcal{D}})\leq\sum|A_i(m)|-\sum|B_i(m)|+\sum| H_i(m)\cap\mathcal{D}|.
				\end{array}\]
				Taking the limit for $m\to\infty$, one gets $| H_i(m)\cap\mathcal{D}|\to 0$ and $\mathcal{A}(\tilde{\mathcal{D}})\to0$, so $\alpha(\partial\mathcal{D})-\beta(\partial\mathcal{D})=2H\tilde{\mathcal{A}}(\mathcal{D})$ and it shows the necessity of \eqref{eq:JScond0}.
				
				To prove the necessity of \eqref{eq:JScond}, we consider any $H$-polygon $\mathcal{P}\subset\mathcal{D}$ and we denote by $E_i$ the arcs of $\mathcal{P}$ in the interior of $\mathcal{D}$. So, we apply again the flux formula
				\[\begin{array}{rl}
					2H\mathcal{A}(\mathcal{D}_\mathcal{P}^{\mathcal{H}(m)})=&\Flux{u}{\mathcal{P}(m)}\\
					=&\sum_k\Flux{u}{A_k(m)}+\sum_j\Flux{u}{B_j(m)}+\sum_l\Flux{u}{E_l(m)}+\sum_i\Flux{u}{H_i(m)\cap\mathcal{D}_\mathcal{P}}\\
					\geq&\sum_k|A_k(m)|-\sum_j|B_j(m)|-\sum_l|E_l(m)|-\sum_i|H_i(m)\cap\mathcal{D}_\mathcal{P}|+\delta\\
					=&2\alpha(\mathcal{P})-\gamma(\mathcal{P})-\sum_i|H_i(m)\cap\mathcal{D}_\mathcal{P}|+\delta.
				\end{array}\]
				Since $\tilde{\mathcal{A}}(\mathcal{D}_\mathcal{P})>\mathcal{A}(\mathcal{D}_\mathcal{P}^{\mathcal{H}(m)})$ and $\sum_i|H_i(m)\cap\mathcal{D}_\mathcal{P}|-\delta<0$ for $n$ large enough, so
				\[2\alpha(\mathcal{P})<\gamma(\mathcal{P})+2H\tilde{\mathcal{A}}(\mathcal{D}_\mathcal{P}).\]	
				
				Similarly,
				\[\begin{array}{rl}
					2H\mathcal{A}(\mathcal{D}_\mathcal{P}^{\mathcal{H}(m)})=&\Flux{u}{\mathcal{P}(m)}\\
					=&\sum_k\Flux{u}{A_k(m)}+\sum_j\Flux{u}{B_j(m)}+\sum_l\Flux{u}{E_l(m)}+\sum_i\Flux{u}{H_i(m)\cap\mathcal{D}_\mathcal{P}}\\
					\leq&\sum_k|A_k(m)|-\sum_j|B_j(m)|+\sum_l|E_l(m)|+\sum_i|H_i(m)\cap\mathcal{D}_\mathcal{P}|-\delta\\
					=&-2\beta(\mathcal{P})+\gamma(\mathcal{P})+\sum_i|H_i(m)\cap\mathcal{D}_\mathcal{P}|-\delta.
				\end{array}\]
				Then,  for $n>>0$,
				\[2\beta(\mathcal{P})<\gamma(\mathcal{P})-2H\tilde{\mathcal{A}}(\mathcal{D}_\mathcal{P}).\]	
			\end{itemize}

			\item Case $\left\{C_i\right\}\neq\emptyset$. The proof of this case is similar to the previous one. We refer to the proof of \cite[Theorem 3.2]{FolMe12} for the details.
		\end{itemize}
		
		Step (3). We use an idea similar to the one of the proof of \Cref{thm:entire} to construct the sequence that will converge to our solution.
		For any $D_i$ let $\left\{D_i^n\right\}$ a family of curves of constant geodesic curvature $\kappa_g(D_i^n)=\tanh(n)$, joining the endpoints of $D_i$. Denote by $\mathcal{D}^n\subset\mathcal{D}$ the domain bounded by $A_i\cup B_i\cup C_i\cup D_i^n$ (this is well defined for every $n\geq n_0$, where $n_0$ is the smaller natural number such that $D_i^n\cap\left(A_i\cup B_i\cup C_i\right)=\emptyset$). Let \[\eta\colon\mathbb{B}((0,0),1)\setminus\left\{(0,0)\right\}\to\partial\mathbb{B}((0,0),1),\qquad\eta(x,y)=\left(\frac{x}{\sqrt{x^2+y^2}},\frac{y}{\sqrt{x^2+y^2}}\right),\] where $\mathbb{B}((0,0),1)$ is the euclidean ball centered in $(0,0)$ with radius $1$, and denote by $g_i^n= g_i\in\mathcal{C}(D_i^n)\circ \eta_{|D_i^n}$. From the previous step we know that for each $n>\max\left\{n_0,n_1=\arctanh(2|H|)\right\}$ there exists a solution $u_n$ of the Jenkins-Serrin problem $P_{JS}(\mathcal{D}^n)$ such that $u_n=g_i^n$ in $D_i^n$.
		 {For each $D_i$ we denote by $\gamma_i^{\pm}$ the curve of constant geodesic curvature $\pm2H$ joining the endpoints of $D_i$ and by $\mathcal{D}_i^\pm$ the domain bounded by $\gamma_i^\pm\cup D_i$ such that $\mathcal{D}_i^+$ in convex and $\mathcal{D}_i^-$ is concave. And denote by $\mathcal{D}^{up}$ the domain bounded by $A_i\cup B_i\cup C_i\cup \gamma_i^-$. Condition (D) of the admissibility of the Jenkins--Serrin domain guaranty that $\mathcal{D}^{up}$, while condition (C) and the previous step guarantee the existence of a unique solution to the following Jenkins--Serrin problem in $\mathcal{D}^{up}$:
		\begin{equation*}
			\begin{cases}
				\begin{array}{ll}
					H(u^{up})=H&\textrm{in }\mathcal{D}^{up},\\
					u^{up}=+\infty&\textrm{in }\cup A_i\cup\gamma_i^-,\\
					u^{up}=0&\textrm{in }\cup B_i,\\
					u^{up}=f_i&\textrm{in } C_i.
				\end{array}
			\end{cases}
		\end{equation*}
		The Maximum Principle implies that $u^{up}>u_n$ for any $n$ in $\mathcal{D}^{up}$. Now for each $D_i$ let $\gamma_i^*\subset\mathcal{D}^{up}$ be a piecewise smooth curve joining the endpoints of $D_i$ with constant geodesic curvature $2H$ with respect to the normal pointing to $D_i$ and such that $u^{up}_{|\gamma_i^*}$ is bounded and denote by $\mathcal{D}_i^*$ the domain bounded by $D_i\cup\gamma_i^*$. 
		The Maximum Principle implies that in $\mathcal{D}_i^*\cap\mathcal{D}^n$, for every $i,n$, \[\max\left\{\max_{\gamma_i^*}u^{up},\max_{D_i}g_i\right\}>u_n.\]  In particular, we have found an upper barrier for the sequence $\left\{u_n\right\}$ in every compact subset of $\mathcal{D}$.}
		
		 {To find a lower barrier bound for $\left\{u_n\right\}$ in every compact subset of $\mathcal{D}$ we notice that in each $\mathcal{D}_i^-$ there exists a function $\omega^-_i$ (described in \Cref{sec:Barriers}) whose graphs is the $H$-surface invariant by hyperbolic translation such that $\omega^-_i$ diverges to $-\infty$ approaching $\gamma_i^-$ and is equal to the function $f$ (bounded above) in $D_i$. In particular, up to a vertical translation we can assume that $f<g_i$ in $D_i$, and so the Maximum Principle implies that $\omega^-_i<u_n$ in $\mathcal{D}_i^-$ for every $n$. Furthermore, fixed $n^*$ sufficiently large such that $\mathcal{D}^{n^*}$ is well defined, the previous step guarantees that there exists a solution to 
		\[\begin{cases}
			\begin{array}{ll}
				H(u^{down})=H&\textrm{in }\mathcal{D}^{n^*},\\
				u^{down}=0&\textrm{in }\cup A_i,\\
				u^{down}=-\infty&\textrm{in }\cup B_i,\\
				u^{down}=f_i&\textrm{in } C_i,\\
				u^{down}=\omega^-_i&\textrm{in } D_i^n.
			\end{array}
		\end{cases}\]
		and, by applying the Maximum Principle, it follows that $u^{down}<u_n$ in $\mathcal{D}^{n^*}$, for every $n>n^*$.}
		 
		Now, the Compactness Theorem implies that there exists a subsequence of $\left\{u_n\right\}$ that converges to a solution of the constant mean curvature equation equal to $H$ in $\mathcal{D}$ with the right boundary values in all the $A_i,B_i,C_i$ by a standard barrier argument. The same barrier argument used in \Cref{thm:entire} guarantees that $u_{|D_i}=g_i$. The Maximum Principle gives the uniqueness and conclude the proof.
	\end{proof}

\end{document}